\newtheorem{thm}{Theorem}[section]
\newtheorem{cor}[thm]{Corollary}
\newtheorem{lem}[thm]{Lemma}
\newtheorem{prop}[thm]{Proposition}
\theoremstyle{definition}
\newtheorem{dfn}[thm]{Definition}
\theoremstyle{remark}
\newtheorem{rmk}[thm]{Remark}
\newtheorem{example}[thm]{Example}
\newcommand{\CC}{\mathbb{C}}
\newcommand{\NN}{\mathbb{N}}
\newcommand{\TT}{\mathbb{T}}
\newcommand{\ZZ}{\mathbb{Z}}
\newcommand{\RR}{\mathbb{R}}
\newcommand{\Gg}{\mathcal{G}}
\newcommand{\Kk}{\mathcal{K}}
\newcommand{\Ll}{\mathcal{L}}
\newcommand{\Mm}{\mathcal{M}}
\newcommand{\Pp}{\mathcal{P}}
\newcommand{\Tt}{\mathcal{T}}
\newcommand{\Uu}{\mathcal{U}}
\newcommand{\Vv}{\mathcal{V}}
\newcommand{\Ww}{\mathcal{W}}
\newcommand{\Zz}{\mathcal{Z}}
\newcommand{\id}{\operatorname{id}}
\newcommand{\Aut}{\operatorname{Aut}}
\newcommand{\MCE}{\operatorname{MCE}}
\newcommand{\lsp}{\operatorname{span}}
\newcommand{\clsp}{\overline{\lsp}}
\newcommand{\Ad}{\operatorname{Ad}}
\newcommand{\lt}{\operatorname{lt}}
\newcommand{\Hcat}[1]{\underline{H}^{#1}}
\newcommand{\Zcat}[1]{\underline{Z}^{#1}}
\newcommand{\dcat}[1]{\underline{\delta}^{#1}}
\newcommand{\csLA}[1]{C^*(\Lambda, A, #1)}
\title[$K$-theory of twisted $k$-graph algebras]{On the $K$-theory of twisted higher-rank-graph $C^*$-algebras}
\author{Alex Kumjian}
\address{Alex Kumjian\\ Department of Mathematics (084)\\ University
of Nevada\\ Reno NV 89557-0084\\ USA} \email{alex@unr.edu}
\author{David Pask}
\address{David Pask, Aidan Sims\\ School of Mathematics and
Applied Statistics  \\
University of Wollongong\\
NSW  2522\\
AUSTRALIA} \email{dpask, asims@uow.edu.au}
\author{Aidan Sims}
\keywords{$C^*$-algebra; Graph algebra; $k$-graph; $K$-theory.}
\subjclass[2010]{Primary 46L05; secondary 46L80.}
\thanks{This research was supported by the Australian Research Council.}
\date{\today}
\begin{document}

\begin{abstract}
We investigate the $K$-theory of twisted higher-rank-graph algebras by adapting parts of
Elliott's computation of the $K$-theory of the rotation algebras. We show that each
2-cocycle on a higher-rank graph taking values in an abelian group determines a
continuous bundle of twisted higher-rank graph algebras over the dual group. We use this
to show that for a circle-valued $2$-cocycle on a higher-rank graph obtained by
exponentiating a real-valued cocycle, the $K$-theory of the twisted higher-rank graph
algebra coincides with that of the untwisted one.
\end{abstract}

\maketitle

\section{Introduction and preliminaries}

Higher-rank graphs, or $k$-graphs, and their $C^*$-algebras were introduced by the first
two authors in \cite{KP2000} as a common generalisation of the graph algebras of
\cite{KPR98} and the higher-rank Cuntz-Krieger algebras of \cite{RobSte:JraM99}. Since
then these $C^*$-algebras have been studied by a number of authors (see for example
\cite{DavidsonYang:CJM09, Evans:NYJM08, FarthingMuhlyEtAl:SF05, SkalskiZacharias:HJM08}).

In \cite{KPS3, KPS4} we introduced homology and cohomology theories for $k$-graphs, and
showed how a $\TT$-valued 2-cocycle $c$ on a $k$-graph $\Lambda$ can be incorporated into
the relations defining its $C^*$-algebra to obtain a twisted $k$-graph $C^*$-algebra
$C^*(\Lambda, c)$. We showed that elementary examples of this construction include all
noncommutative tori and the twisted Heegaard-type quantum spheres of \cite{BHMS}.

In this paper we consider the $K$-theory of twisted $k$-graph $C^*$-algebras, following
the approach of Elliott to the computation of $K$-theory for the noncommutative tori
\cite{Elliott:Neptun84}. Rather than computing the $K$-theory of a twisted $k$-graph
$C^*$-algebra directly, we show that in many instances the problem can be reduced to that
of computing the $K$-theory of the untwisted $C^*$-algebra of the same $k$-graph. In
many examples, the latter is known or readily computable (see \cite{Evans:NYJM08}).
In order to follow Elliott's method, we must first construct a suitable $C^*$-algebra bundle from the
data used to build a twisted $k$-graph $C^*$-algebra.

In Section~\ref{sec:extCstar} we consider an abelian group $A$, and an $A$-valued
$2$-cocycle on a $k$-graph $\Lambda$. To this data we associate a $C^*$-algebra
$\csLA{c}$. We show in Proposition~\ref{prp:extension algebra bundle} that $\csLA{c}$ is
a $C_0(\widehat{A}\,)$-algebra whose fibre over a character $\chi$ of $A$ is the twisted
$k$-graph $C^*$-algebra $C^*(\Lambda, \chi \circ c)$. In particular, $\csLA{c}$ is the
section algebra of an upper semicontinuous $C^*$-bundle over $\widehat{A}$. In
Section~\ref{app:lsc}, we adapt an argument of Rieffel to show that this bundle is
continuous (Corollary~\ref{cor:cts field}). We demonstrate that examples of this
construction include continuous fields of noncommutative tori and of Heegaard-type
quantum spheres.

In Section~\ref{sec:AFbundle} we consider $k$-graphs $\Lambda$ for which the degree map
is a coboundary. We show that for every $A$-valued $2$-cocycle on such a $k$-graph, we
have $\csLA{c} \cong C^*(A) \otimes C^*(\Lambda)$. If $A = \RR$ then $\widehat{A} \cong
\RR$ and so $C^*(\Lambda , \RR , c)$ is a $C_0(\RR)$-algebra. We deduce in
Corollary~\ref{cor:[0,1]-bundle} that the ``restriction" $C^*(\Lambda, \RR, c)|_{[0,1]}$
is isomorphic to $C([0,1]) \otimes C^*(\Lambda)$, and so the maps $C^*(\Lambda, \RR,
c)|_{[0,1]} \to C^*(\Lambda, \RR, c)_t \cong C^*(\Lambda)$ all induce isomorphisms in
$K$-theory.

In Section~\ref{sec:K-theory} we execute Elliott's argument. We consider an $\RR$-valued
$2$-cocycle $c_0$ on a $k$-graph $\Lambda$. The degree functor on $\Lambda \times_d
\ZZ^k$ is a coboundary, and we exploit this to show that $C^*(\Lambda, \RR, c_0)_{[0,1]}$
embeds as a full corner of a crossed product of $C([0,1]) \otimes C^*(\Lambda \times_d
\ZZ^k)$ by a fibre-preserving $\ZZ^k$-action. An induction on $k$ shows that the
homomorphisms of this crossed product onto its fibres induce isomorphisms in $K$-theory.
Hence the $K$-theory of $C^*(\Lambda, \RR, c_0)_{1} \cong C^*(\Lambda, e^{\imath tc_0})$
is identical to that of $C^*(\Lambda, \RR, c_0)_{0} \cong C^*(\Lambda)$
(Theorem~\ref{thm:main K-theory}). We apply our results to the noncommutative tori and to
the twisted Heegaard-type quantum spheres of \cite{BHMS}, recovering the existing
formulae for their $K$-theory. We also discuss the implications of our results for
Kirchberg algebras associated to $k$-graphs.

\subsection*{Background and notation}
Throughout the paper, we regard $\NN^k$ as a monoid under addition, with identity 0 and
generators $e_1, \dots, e_k$. For $m,n \in \NN^k$, we write $m_i$ for the
$i$\textsuperscript{th} coordinate of $m$, and define $m \vee n \in \NN^k$ by $(m \vee
n)_i = \max\{m_i, n_i\}$.

Let $\Lambda$ be a countable small category and $d : \Lambda \to \NN^k$ be a functor.
Write $\Lambda^n := d^{-1}(n)$ for $n \in \NN^k$. Recall that $\Lambda$ is a $k$-graph if
$d$ satisfies the \emph{factorisation property}: $(\mu,\nu) \mapsto \mu\nu$ is a
bijection of $\{(\mu,\nu) \in \Lambda^m \times \Lambda^n : s(\mu) = r(\nu)\}$ onto
$\Lambda^{m+n}$ for each $m,n \in \NN^k$. We then have $\Lambda^0 = \{\id_o : o\in
\operatorname{Obj}(\Lambda)\}$, and so we regard the domain and codomain maps as maps $s,
r : \Lambda \to \Lambda^0$. Recall from \cite{PaskQuiggEtAl:NYJM04} that for $v,w \in
\Lambda^0$ and $X \subseteq \Lambda$, we write
\[
v X := \{\lambda \in X : r(\lambda) = v\},\quad
X w := \{\lambda \in X : s(\lambda) = w\},\quad\text{and}\quad
v X w = vX \cap Xw.
\]
A $k$-graph is \emph{row-finite with no sources} if $0 < |v\Lambda^n| < \infty$ for all
$v \in \Lambda^0$ and $n \in \NN^k$. Given $\mu,\nu \in \Lambda$, we write
$\MCE(\mu,\nu)$ for the set $\{\lambda \in \Lambda^{d(\mu) \vee d(\nu)} : \lambda =
\mu\mu' = \nu\nu'\text{ for some }\mu',\nu'\}$. See \cite{KP2000} for further details
regarding the basic structure of $k$-graphs. Let $\Lambda^{*2} = \{(\lambda, \mu) \in
\Lambda \times \Lambda : s(\lambda) = r(\mu)\}$ denote the collection of composable pairs
in $\Lambda$.

Given an abelian group $A$ and a $k$-graph $\Lambda$, an $A$-valued
$2$-cocycle\footnote{In \cite{KPS4} these were called \emph{categorical cocycles}, in
contradistinction to cubical cocycles.} $c$ on $\Lambda$ is a map $c :\Lambda^{*2} \to A$
such that $c(\lambda, s(\lambda)) = c(r(\lambda), \lambda) = 0$ for all $\lambda$ and
$c(\mu,\nu) + c(\lambda, \mu\nu) = c(\lambda,\mu) + c(\lambda\mu, \nu)$ for each
composable triple $(\lambda,\mu,\nu)$. The group of all such $2$-cocycles is denoted
$\Zcat2(\Lambda, A)$. Given $c \in \Zcat2(\Lambda, \TT)$, the twisted $k$-graph
$C^*$-algebra $C^*(\Lambda, c)$ is the universal $C^*$-algebra generated by elements
$s_\lambda$, $\lambda \in \Lambda$ such that: (1) the $s_v$, $v \in \Lambda^0$ are
mutually orthogonal projections; (2) $s_\mu s_\nu = c(\mu,\nu) s_{\mu\nu}$ when $\mu,
\nu$ are composable; (3) $s^*_\mu s_\mu = s_{s(\mu)}$ for all $\mu$; and (4) $s_v =
\sum_{\lambda \in v\Lambda^n} s_\lambda s^*_\lambda$ for all $v \in \Lambda^0$ and $n \in
\NN^k$. See \cite{KPS4} for further details regarding twisted $k$-graph $C^*$-algebras.
We denote by $\Zz(B)$, $\Mm(B)$ and $\Uu(B)$ the centre, multiplier algebra and unitary
group of a $C^*$-algebra $B$.

\section{\texorpdfstring{$C^*$}{C*}-bundles associated to 2-cocycles}\label{sec:extCstar}

Given a $k$-graph $\Lambda$ and an abelian group $A$, we associate to each $c \in
\Zcat2(\Lambda, A)$ a $C^*$-algebra $\csLA{c}$. We show that $\csLA{c}$ is a
$C_0(\widehat{A}\,)$-algebra whose fibres are twisted $k$-graph $C^*$-algebras associated
to $\Lambda$.

Throughout this paper, if $A$ is an abelian group, then given a unitally extendible
homomorphism $\pi : C^*(A) \to B$ we also write $\pi :\Mm(C^*(A)) \to \Mm(B)$ for its
extension. We identify elements of $A$ with the corresponding unitary multipliers of
$C^*(A)$. We denote general elements of $C^*(A)$ by the letters $f,g,\dots$ and often
regard them as elements of $C_0(\widehat{A}\,)$.

\begin{dfn}\label{dfn:c-representation}
Let $\Lambda$ be a row-finite $k$-graph with no sources, let $A$ be an abelian group, and
fix $c \in \Zcat2(\Lambda, A)$. A \emph{$c$-representation} $(\phi, \pi)$ of $(\Lambda,
A)$ in a $C^*$-algebra $B$ consists of a map $\phi : \Lambda \to \Mm(B)$ and a unitally
extendible homomorphism $\pi : C^*(A) \to \Mm(B)$ such that $\phi(\lambda)\pi(f) \in B$
for all $\lambda \in \Lambda$ and $f \in C^*(A)$, and:
\begin{enumerate}\renewcommand{\theenumi}{R\arabic{enumi}}
\item\label{it:LArep1} $\phi(\lambda)\pi(f) = \pi(f)\phi(\lambda)$ for all $\lambda
    \in \Lambda$ and $f \in C^*(A)$;
\item\label{it:LArep2} $\phi(v)$, $v \in \Lambda^0$ are mutually orthogonal
    projections and $\sum \phi(v) \to 1_{\Mm(B)}$ strictly;
\item\label{it:LArep3} $\phi(\lambda)\phi(\mu) = \pi(c(\lambda,\mu))\phi(\lambda\mu)$
    whenever $s(\lambda) = r(\mu)$;
\item\label{it:LArep4} $\phi(\lambda)^*\phi(\lambda) = \phi(s(\lambda))$ for all
    $\lambda \in \Lambda$;
\item\label{it:LArep5} $\sum_{\lambda \in v\Lambda^n} \phi(\lambda) \phi(\lambda)^* =
    \phi(v)$ for all $v \in \Lambda^0$ and $n \in\NN^k$.
\end{enumerate}
We define $C^*(\phi, \pi) := C^*(\{\phi(\lambda)\pi(f) : \lambda \in \Lambda, f \in
C^*(A)\} \subseteq B$.
\end{dfn}

\begin{rmk}
If $A$ is discrete, $c \in \Zcat2(\Lambda, A)$, and $(\phi, \pi)$ is a $c$-representation
of $(\Lambda, A)$, then $\phi(\Lambda) \subseteq C^*(\phi, \pi)$. If $|\Lambda^0| <
\infty$, then $C^*(\Lambda)$ is unital, so $\pi(C^*(A)) \subseteq C^*(\phi, \pi)$ too;
moreover condition~(\ref{it:LArep1}) then reduces to $\pi(a) \in \Uu\Zz(C^*(\phi, \pi))$
for each $a \in A$.
\end{rmk}

\begin{lem}
Let $\Lambda$ be a row-finite $k$-graph with no sources, let $A$ be an abelian group, and
suppose that $c \in \Zcat2(\Lambda, A)$. There is a $c$-representation $(i_\Lambda, i_A)$
of $(\Lambda, A)$ in a $C^*$-algebra $\csLA{c}$ that is universal in the sense that
\begin{equation}\label{eq:univ spanners}
\csLA{c} = \clsp\{i_\Lambda(\lambda)i_A(f)i_\Lambda(\mu)^* : \lambda,\mu \in \Lambda, f \in C_c(\widehat{A}\,)\},
\end{equation}
and any $c$-representation $(\phi, \pi)$ of $(\Lambda, A)$ in a $C^*$-algebra $B$ induces
a homomorphism $\phi \times \pi : \csLA{c} \to B$ such that $(\phi \times \pi) \circ
i_\Lambda = \phi$ and $(\phi \times \pi) \circ i_A = \pi$.
\end{lem}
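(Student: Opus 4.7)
The plan is to construct $\csLA{c}$ as a universal $C^*$-algebra for the relations (R\ref{it:LArep1})--(R\ref{it:LArep5}), verify the spanning formula \eqref{eq:univ spanners} by the usual $k$-graph reduction, and then read off the universal property from the construction. The first thing to check is that $c$-representations exist and that the images of the generators are uniformly norm-bounded. For each character $\chi \in \widehat A$ one obtains a $c$-representation $(\phi_\chi, \pi_\chi)$ in $\Mm(C^*(\Lambda, \chi \circ c))$ by taking $\phi_\chi(\lambda) = s^\chi_\lambda$ and $\pi_\chi(f) := f(\chi) \cdot 1$: relation (R\ref{it:LArep3}) is exactly the twisted Cuntz--Krieger relation for $\chi \circ c$, and (R\ref{it:LArep1}) is automatic since $\pi_\chi(f)$ is a scalar multiplier. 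In any $c$-representation, relations (R\ref{it:LArep2}) and (R\ref{it:LArep4}) give $\|\phi(\lambda)\|^2 = \|\phi(s(\lambda))\| \leq 1$, and $\pi$ is norm-decreasing as a $*$-homomorphism. I then build $\csLA{c}$ in the standard way: take the universal $C^*$-seminorm on the free $*$-algebra generated by symbols $\{t_\lambda : \lambda \in \Lambda\}$ together with the image of a dense $*$-subalgebra $\mathcal A_0 \subseteq C^*(A)$ (for instance the image of $C_c(\widehat A)$), subject to (R\ref{it:LArep1})--(R\ref{it:LArep5}); then quotient by the kernel of this seminorm and complete. Existence of the $(\phi_\chi, \pi_\chi)$ shows the seminorm is honest, the uniform bound shows it is finite, and non-degeneracy in (R\ref{it:LArep2}) lets me regard $i_\Lambda(\lambda)$ and $i_A(f)$ as multipliers of the resulting algebra $\csLA{c}$.

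The main work is the spanning identity \eqref{eq:univ spanners}. In an arbitrary word in the generators and their adjoints, relation (R\ref{it:LArep1}) moves every factor $i_A(f)$ past each $i_\Lambda$-factor, and since $\mathcal A_0$ is closed under products and involution the accumulated $i_A$-factors combine into a single $i_A(g)$. The remaining alternating product of $i_\Lambda$'s and their adjoints is reduced by the standard $k$-graph argument: (R\ref{it:LArep3}) replaces $i_\Lambda(\mu)i_\Lambda(\nu)$ by $i_A(c(\mu,\nu))i_\Lambda(\mu\nu)$; (R\ref{it:LArep4}) collapses $i_\Lambda(\lambda)^*i_\Lambda(\lambda)$ to $i_\Lambda(s(\lambda))$; and (R\ref{it:LArep5}) together with the factorisation property lets me rewrite $i_\Lambda(\mu)^* i_\Lambda(\nu)$ as a finite sum, indexed by pairs $(\alpha,\beta)$ with $\mu\alpha = \nu\beta \in \MCE(\mu,\nu)$, of terms $i_A(g_{\alpha,\beta}) i_\Lambda(\alpha) i_\Lambda(\beta)^*$, where $g_{\alpha,\beta}$ is a product of $A$-valued cocycle values coming from the preceding applications of (R\ref{it:LArep3}). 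Iterating brings every word into the claimed form $i_\Lambda(\lambda) i_A(f) i_\Lambda(\mu)^*$ with $s(\lambda) = s(\mu)$, and taking the norm closure yields \eqref{eq:univ spanners}.

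The universal property is now automatic from the construction: a $c$-representation $(\phi,\pi)$ in $B$ sends $t_\lambda \mapsto \phi(\lambda)$ and $u_f \mapsto \pi(f)$, and this extends to a $*$-homomorphism of the quotient $*$-algebra that is norm-contractive by the definition of the universal seminorm, hence extends uniquely to a homomorphism $\phi \times \pi : \csLA{c} \to B$ with the stated compatibilities. I expect the main technical hurdle to be the precise bookkeeping of $A$-valued cocycle factors appearing in the reduction of $i_\Lambda(\mu)^* i_\Lambda(\nu)$, together with the mild care required to handle multiplier-valued generators when $|\Lambda^0|$ is infinite; both issues are directly parallel to the computations already carried out for $\TT$-valued cocycles in \cite{KPS4}, so while delicate they should be routine in spirit.
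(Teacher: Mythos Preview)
Your approach is essentially the same as the paper's: the key computation is the reduction formula for $\phi(\mu)^*\phi(\nu)$ as a finite sum $\sum_{\mu\alpha=\nu\beta\in\MCE(\mu,\nu)} \pi(c(\nu,\beta)-c(\mu,\alpha))\phi(\alpha)\phi(\beta)^*$, which the paper states explicitly (citing \cite[Lemma~7.2]{KPS4}) and you describe in words; combined with the norm bound $\|\phi(\lambda)\pi(f)\phi(\mu)^*\|\le\|f\|$, the paper then simply invokes an argument like \cite[Proposition~1.21]{CBMSbook} for the universal construction, whereas you spell that construction out. Your explicit use of the fibre representations $(\phi_\chi,\pi_\chi)$ to check that the universal seminorm is nonzero is a detail the paper leaves implicit in that reference.
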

\begin{proof}
Given a $c$-representation $(\phi, \pi)$ of $(\Lambda, A)$ in a $C^*$-algebra $B$,
calculations like those of \cite[Lemma~7.2]{KPS4} show that for $\mu, \nu \in \Lambda$,
we have
\begin{equation}\label{eq:multiplication}
\phi(\mu)^*\phi(\nu) = \sum_{\mu\alpha = \nu\beta \in \MCE(\mu,\nu)}
    \pi(c(\nu,\beta) - c(\mu,\alpha)) \phi(\alpha) \phi(\beta)^*.
\end{equation}
Since $\pi(C^*(A))$ is central, $\clsp\{\phi(\lambda)\pi(f)\phi(\mu)^* : \lambda,\mu \in
\Lambda, f \in C_c(\widehat{A}\,)\}$ is therefore a $C^*$-subalgebra of $B$. The
$\phi(\lambda)$ are partial isometries by (\ref{it:LArep2})~and~(\ref{it:LArep4}). Thus
$\|\phi(\lambda)\pi(f)\phi(\mu)^*\| \le \|f\|$ for each $\lambda$. An argument like
\cite[Proposition~1.21]{CBMSbook} now proves the lemma.
\end{proof}

Recall that if $\Lambda$ is a $k$-graph, $A$ is an abelian group and $b : \Lambda \to A$,
then the $2$-coboundary $\dcat1b : \Lambda^{*2} \to A$ is given by $\dcat1b(\lambda_1,
\lambda_2) = b(\lambda_1) - b(\lambda_1 \lambda_2) + b(\lambda_2)$.

\begin{lem}\label{eq:cohomologous exts}
Let $\Lambda$ be a row-finite $k$-graph with no sources, and let $A$ be an abelian group.
Suppose that $c, c' \in \Zcat2(\Lambda, A)$ and $b : \Lambda \to A$ satisfy $c - c' =
\dcat1b$. Then there is an isomorphism $\csLA{c} \cong \csLA{c'}$ determined by
$i^c_\Lambda(\lambda)i^c_A(f) \mapsto i^{c'}_A
(b(\lambda))i^{c'}_\Lambda(\lambda)i^{c'}_A(f)$.
\end{lem}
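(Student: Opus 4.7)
The plan is to construct the isomorphism by invoking the universal property of $\csLA{c}$ applied to a suitably twisted copy of the universal $c'$-representation. Specifically, working inside $\Mm(\csLA{c'})$, I would define
\[
\phi : \Lambda \to \Mm(\csLA{c'}), \qquad \phi(\lambda) := i^{c'}_A(b(\lambda))\, i^{c'}_\Lambda(\lambda),
\]
and set $\pi := i^{c'}_A$. The goal is to show $(\phi, \pi)$ is a $c$-representation of $(\Lambda, A)$ in $\csLA{c'}$; the universal property of $\csLA{c}$ then produces a homomorphism $\Phi := \phi \times \pi : \csLA{c} \to \csLA{c'}$ acting on spanning elements by the stated formula. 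Swapping the roles of $c$ and $c'$ (and replacing $b$ with $-b$, which satisfies $c' - c = \dcat1(-b)$) yields a homomorphism in the reverse direction, and a direct check on generators shows the two are mutually inverse.

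The verification of the $c$-representation axioms is a straightforward bookkeeping exercise. Relation~(\ref{it:LArep1}) follows because $\pi(C^*(A))$ is central in $\Mm(\csLA{c'})$ (relation~(\ref{it:LArep1}) for the universal $c'$-representation) and $i^{c'}_A(b(\lambda))$ commutes with $\pi(f)$ since $A$ is abelian; relation~(\ref{it:LArep4}) and~(\ref{it:LArep5}) follow because $i^{c'}_A(\pm b(\lambda))$ is a unitary multiplier that cancels with its adjoint. The key computation is~(\ref{it:LArep3}):
\[
\phi(\lambda)\phi(\mu)
 = i^{c'}_A\bigl(b(\lambda) + b(\mu) + c'(\lambda,\mu)\bigr)\, i^{c'}_\Lambda(\lambda\mu),
\]
and this equals $\pi(c(\lambda,\mu)) \phi(\lambda\mu) = i^{c'}_A(c(\lambda,\mu) + b(\lambda\mu)) i^{c'}_\Lambda(\lambda\mu)$ precisely because $c(\lambda,\mu) - c'(\lambda,\mu) = b(\lambda) - b(\lambda\mu) + b(\mu) = \dcat1 b(\lambda,\mu)$, which is the hypothesis.

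The one subtlety, and the only place where any thought is required, is relation~(\ref{it:LArep2}): I need $\phi(v)$ to be a projection (not merely a unitary times a projection) so that the strict-convergence statement makes sense. The rescue is that the normalization $c(v,\lambda) = c'(v,\lambda) = 0$ for $v = r(\lambda)$ combined with $(c - c')(v,\lambda) = \dcat1 b(v,\lambda) = b(v) + b(\lambda) - b(v\lambda) = b(v)$ forces $b(v) = 0$ for every $v \in \Lambda^0$; hence $\phi(v) = i^{c'}_\Lambda(v)$, and~(\ref{it:LArep2}) is inherited from the universal $c'$-representation. Once this is noted, the remaining integrability condition $\phi(\lambda)\pi(f) \in \csLA{c'}$ is immediate from $\phi(\lambda)\pi(f) = i^{c'}_\Lambda(\lambda)\,i^{c'}_A(b(\lambda)f)$.

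I do not expect any serious obstacle: both halves of the argument are symmetric, and checking that the two induced homomorphisms compose to the identity reduces to evaluating on the spanning elements in~(\ref{eq:univ spanners}), where the factors $i^{c'}_A(\pm b(\lambda))$ and $i^{c}_A(\pm b(\mu))$ cancel by a one-line computation.
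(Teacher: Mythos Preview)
Your proposal is correct and follows exactly the paper's approach: define $\phi(\lambda) = i^{c'}_A(b(\lambda))\,i^{c'}_\Lambda(\lambda)$ and $\pi = i^{c'}_A$, verify this is a $c$-representation, and invoke symmetry for the inverse. You simply flesh out details the paper leaves implicit (notably the observation that $b(v)=0$ for $v\in\Lambda^0$, needed for~(\ref{it:LArep2})).
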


\begin{proof}
Define $\pi := i^{c'}_A$ and $\phi(\lambda) :=
i^{c'}_A(b(\lambda))i^{c'}_\Lambda(\lambda)$. Then $(\phi, \pi)$ is a $c$-representation
of $(\Lambda, A)$ in $\csLA{c'}$. The homomorphism $\phi \times \pi : \csLA{c} \to
\csLA{c'}$ satisfies the desired formula. Symmetry provides an inverse.
\end{proof}

Recall (from \cite[Appendix~C]{Williams:Crossedproducts07} for example) that if $X$ is a
locally compact Hausdorff space, then a $C_0(X)$-algebra is a $C^*$-algebra $B$ equipped
with a homomorphism $i : C_0(X) \to \Zz\Mm(B)$ such that $\clsp\{i(f)b : f \in
C_0(X)\text{ and }b \in B\} = B$. For $x \in X$, we write $I_x$ for the ideal $\{i(f)b :
b \in B, f(x) = 0\}$, we write $B_x := B/I_x$, and $q_x : B \to B_x$ denotes the quotient
map.

\begin{prop}\label{prp:extension algebra bundle}
Let $\Lambda$ be a row-finite $k$-graph with no sources, let $A$ be an abelian group, and
fix $c \in \Zcat2(\Lambda, A)$. Then $\csLA{c}$ is a $C_0(\widehat{A}\,)$-algebra with
respect to $i_A : C^*(A) \to \Mm(\csLA{c})$. For each character $\chi \in \widehat{A}$,
there is a unique homomorphism $\pi_\chi : \csLA{c} \to C^*(\Lambda, \chi \circ c)$ such
that $\pi_\chi(i_\Lambda(\lambda)i_A(f)) = f(\chi) s_\lambda$. This $\pi_\chi$ descends
to an isomorphism $\csLA{c}_\chi \cong C^*(\Lambda, \chi \circ c)$. The map $\chi \mapsto
\|\pi_\chi(b)\|$ is upper semicontinuous for each $b \in \csLA{c}$, and $b \mapsto (\chi
\mapsto \pi_\chi(b))$ is an isomorphism of $\csLA{c}$ onto the section algebra of an
upper-semicontinuous $C^*$-bundle with fibres $C^*(\Lambda, \chi \circ c)$.
\end{prop}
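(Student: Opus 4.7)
My approach is to verify the $C_0(\widehat{A}\,)$-structure directly from the defining relations, build each fibre map $\pi_\chi$ from the universal property of $\csLA{c}$, produce a two-sided inverse at the level of fibres from the universal property of the twisted $k$-graph algebra $C^*(\Lambda, \chi\circ c)$, and then invoke the standard dictionary between $C_0(X)$-algebras and upper-semicontinuous $C^*$-bundles.

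For the $C_0(\widehat{A}\,)$-structure I would check that $i_A : C^*(A) \to \Mm(\csLA{c})$ takes values in $\Zz\Mm(\csLA{c})$ and is nondegenerate. Centrality is immediate from (\ref{it:LArep1}): $i_A(f)$ commutes with every $i_\Lambda(\lambda)$ and its adjoint, and these generate $\csLA{c}$. Nondegeneracy $\clsp\{i_A(f)b : f \in C^*(A),\ b \in \csLA{c}\} = \csLA{c}$ then follows from (\ref{eq:univ spanners}) after rewriting a spanning element as $i_\Lambda(\lambda)i_A(f)i_\Lambda(\mu)^* = i_A(f)\bigl(i_\Lambda(\lambda)i_\Lambda(\mu)^*\bigr)$.

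For the fibre maps, fix $\chi \in \widehat{A}$ and let $\{s_\lambda\}$ denote the canonical generators of $C^*(\Lambda, \chi\circ c)$. Setting $\phi_\chi(\lambda) := s_\lambda$ and $\pi_\chi^0(f) := f(\chi)\cdot 1$ in $\Mm(C^*(\Lambda, \chi\circ c))$, a direct check of (\ref{it:LArep1})--(\ref{it:LArep5}) against the defining relations of $C^*(\Lambda, \chi\circ c)$ shows that $(\phi_\chi, \pi_\chi^0)$ is a $c$-representation; the only non-routine point is (\ref{it:LArep3}), which works because $\pi_\chi^0(c(\lambda,\mu)) = \chi(c(\lambda,\mu))$ is precisely the scalar appearing in the relation $s_\lambda s_\mu = \chi(c(\lambda,\mu)) s_{\lambda\mu}$. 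The universal property then supplies $\pi_\chi := \phi_\chi \times \pi_\chi^0$ satisfying the stated formula, and uniqueness follows from (\ref{eq:univ spanners}). Since $\pi_\chi(i_A(f)) = f(\chi)\cdot 1$ vanishes when $f(\chi) = 0$, the map $\pi_\chi$ kills the ideal $I_\chi$ and descends to $\widetilde\pi_\chi : \csLA{c}_\chi \to C^*(\Lambda, \chi\circ c)$.

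To invert $\widetilde\pi_\chi$, set $t_\lambda := q_\chi(i_\Lambda(\lambda)) \in \csLA{c}_\chi$; since $q_\chi(i_A(a)) = \chi(a)\cdot 1_{\Mm(\csLA{c}_\chi)}$ for each $a \in A$, applying $q_\chi$ to the $c$-representation relations turns them into exactly the defining relations of $C^*(\Lambda, \chi\circ c)$ for the family $\{t_\lambda\}$. The universal property of the twisted $k$-graph algebra then yields $\psi : C^*(\Lambda, \chi\circ c) \to \csLA{c}_\chi$ with $\psi(s_\lambda) = t_\lambda$, and the compositions $\widetilde\pi_\chi \circ \psi$ and $\psi \circ \widetilde\pi_\chi$ act as the identity on $\{s_\lambda\}$ and on $\{q_\chi(i_\Lambda(\lambda)i_A(f)i_\Lambda(\mu)^*)\}$ respectively, hence are global identities. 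Once the fibres have been identified, the general theory of $C_0(X)$-algebras \cite[Appendix~C]{Williams:Crossedproducts07} delivers the upper-semicontinuous $C^*$-bundle, the upper semicontinuity of $\chi \mapsto \|\pi_\chi(b)\|$, and the section-algebra isomorphism for free. The most delicate step I anticipate is verifying that $\{t_\lambda\}$ really satisfies all the twisted $k$-graph relations in the quotient --- in particular mutual orthogonality of the $t_v$ and the covering relation $\sum_{\lambda\in v\Lambda^n} t_\lambda t_\lambda^* = t_v$ --- but both are safe because the relevant sums are finite by row-finiteness of $\Lambda$ and $q_\chi$ is a $\ast$-homomorphism.
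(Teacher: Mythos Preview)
Your proof is correct and follows essentially the same route as the paper: establish the $C_0(\widehat{A}\,)$-algebra structure from (\ref{it:LArep1}) and \eqref{eq:univ spanners}, build $\pi_\chi$ from the universal property of $\csLA{c}$ applied to the $c$-representation $(\lambda\mapsto s_\lambda,\ f\mapsto f(\chi))$, build the inverse on the fibre from the universal property of $C^*(\Lambda,\chi\circ c)$ applied to the family $t_\lambda = q_\chi(i_\Lambda(\lambda))$, and then invoke \cite[Appendix~C]{Williams:Crossedproducts07} for the bundle statements. The only cosmetic wrinkle is your nondegeneracy argument, where $i_\Lambda(\lambda)i_\Lambda(\mu)^*$ lives in $\Mm(\csLA{c})$ rather than $\csLA{c}$; this is harmless since $i_A$ is unitally extendible by definition of a $c$-representation, so an approximate identity in $C^*(A)$ does the job.
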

\begin{proof}
Condition~(\ref{it:LArep1}) and equation~\eqref{eq:univ spanners} imply that each
$\csLA{c}$ is a $C_0(\widehat{A}\,)$-algebra with respect to $i_A$. Fix $\chi \in
\widehat{A}$. The map $\chi \circ c$ belongs to $\Zcat2(\Lambda, \TT)$, and $(\lambda
\mapsto s_\lambda, f \mapsto f(\chi))$ is a $c$-representation of $(\Lambda, A)$ in
$C^*(\Lambda, \chi \circ c)$. The universal property of $\csLA{c}$ yields the desired
homomorphism $\pi_\chi$. We have $I_\chi \subseteq \ker(\pi_\chi)$, and so $\pi_\chi$
descends to a homomorphism $\widetilde{\pi}_\chi : \csLA{c}_\chi \to C^*(\Lambda, \chi
\circ c)$. Since $q_\chi(f) = f(\chi)$ for all $\chi \in \widehat{A}$, the partial
isometries $t_\lambda := q_\chi(i_\Lambda(\lambda))$ constitute a Cuntz-Krieger
$(\Lambda, \chi \circ c)$-family in $\csLA{c}_\chi$. Thus the universal property of
$C^*(\Lambda, \chi \circ c)$ gives an inverse for $\widetilde{\pi}_\chi$. Proposition~C.10(a) of
\cite{Williams:Crossedproducts07} implies that $\chi \mapsto \|q_\chi(b)\| =
\|\pi_\chi(b)\|$ is upper semicontinuous. The final statement is proved in the first
paragraph of the proof of \cite[Theorem~C.26]{Williams:Crossedproducts07}.
\end{proof}

We digress to characterise when $\pi \times \phi$ is injective.

\begin{thm}
Let $A$ be an abelian group, and fix $c \in \Zcat2(\Lambda, A)$. Suppose that $(\phi,
\pi)$ is a $c$-representation of $(\Lambda, A)$ in a $C^*$-algebra $B$. Suppose that
there is an action $\beta$ of $\TT^k$ on $B$ such that $\beta_z(\phi(\lambda)\pi(f)) =
z^{d(\lambda)}\phi(\lambda)\pi(f)$ for all $\lambda$, $f$. Suppose that $f \mapsto
\phi(w) \pi(f)$ is injective on $C^*(A)$ for each $w \in \Lambda^0$. Then $\phi \times
\pi : \csLA{c} \to B$ is injective.
\end{thm}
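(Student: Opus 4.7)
This is a gauge-invariant uniqueness theorem for $\csLA{c}$, and my plan follows the familiar template. Using the universal property I would first build a strongly continuous gauge action $\gamma : \TT^k \to \Aut(\csLA{c})$ satisfying $\gamma_z(i_\Lambda(\lambda) i_A(f)) = z^{d(\lambda)} i_\Lambda(\lambda) i_A(f)$, by checking that $(z^{d(\cdot)} i_\Lambda, i_A)$ is a $c$-representation for each $z$ and invoking universality. The hypothesis on $\beta$ says that $\phi \times \pi$ is equivariant, so averaging over Haar measure on $\TT^k$ yields faithful conditional expectations $\Phi : \csLA{c} \to \csLA{c}^\gamma$ and $\Psi : C^*(\phi, \pi) \to C^*(\phi,\pi)^\beta$ intertwined by $\phi \times \pi$. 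Faithfulness of $\Phi$ reduces the problem to showing $\phi \times \pi$ is injective on $\csLA{c}^\gamma$.

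By~\eqref{eq:univ spanners} and the formula for $\gamma_z$, $\csLA{c}^\gamma$ is the closed span of $i_\Lambda(\lambda) i_A(f) i_\Lambda(\mu)^*$ with $d(\lambda) = d(\mu)$. For $n \in \NN^k$ let $F_n$ be the closed span of those generators with $d(\lambda) = d(\mu) = n$. Inserting the resolution $\sum_{\nu \in s(\lambda)\Lambda^{m-n}} i_\Lambda(\nu) i_\Lambda(\nu)^*$ between $i_A(f)$ and $i_\Lambda(\mu)^*$ via~(\ref{it:LArep5}), and then using~(\ref{it:LArep3}) to absorb $\nu$ into $\lambda$ and into $\mu$, shows $F_n \subseteq F_m$ for $n \le m$; the cocycle corrections $c(\lambda,\nu)$ and $c(\mu,\nu)$ produced during this manipulation lie in the central image of $C^*(A)$ and can be absorbed into the factor $f$. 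Thus $\csLA{c}^\gamma = \overline{\bigcup_n F_n}$, and it suffices to prove injectivity on each $F_n$. This bookkeeping with the cocycle corrections is the most delicate part of the argument, though the centrality of $i_A$ makes it manageable.

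For the final step, \eqref{eq:multiplication} gives $i_\Lambda(\mu)^* i_\Lambda(\alpha) = \delta_{\mu, \alpha} i_\Lambda(s(\mu))$ whenever $d(\mu) = d(\alpha) = n$ (the only element of $\MCE(\mu,\alpha)$ is forced to be $\mu = \alpha$, and the attendant cocycle term vanishes). Hence $\{i_\Lambda(\lambda) i_\Lambda(\mu)^* : \lambda, \mu \in \Lambda^n v\}$ is a system of matrix units commuting with $i_A(C^*(A))$, and setting $D_v := \clsp\{i_A(f) i_\Lambda(v) : f \in C^*(A)\}$ exhibits $B_{n,v} := \clsp\{i_\Lambda(\lambda) i_A(f) i_\Lambda(\mu)^* : \lambda, \mu \in \Lambda^n v\} \cong M_{|\Lambda^n v|}(D_v)$. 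Since the projections $\{i_\Lambda(v)\}_{v \in \Lambda^0}$ are mutually orthogonal, $F_n = \bigoplus_{v \in \Lambda^0} B_{n,v}$. The map $\phi \times \pi$ carries matrix units to matrix units, so its restriction to $B_{n,v}$ is injective iff the induced map $D_v \to \clsp\{\pi(f)\phi(v)\}$ is injective. That map sits in a factorisation $C^*(A) \twoheadrightarrow D_v \to \pi(C^*(A))\phi(v)$ whose composition is $f \mapsto \pi(f)\phi(v)$, injective by hypothesis; the second arrow is therefore injective, and assembling the blocks over $v$ and the directed family over $n$ completes the proof.
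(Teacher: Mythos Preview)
Your argument is correct, but it follows a genuinely different route from the paper's. You run the standard gauge-invariant-uniqueness template \emph{directly} on $\csLA{c}$: build the gauge action via universality, reduce to the fixed-point algebra by averaging, exhibit $\csLA{c}^\gamma$ as an inductive limit of the $F_n$, and then identify each $F_n$ as a $c_0$-direct sum of $\Kk_{\Lambda^n v}\otimes D_v$ on which injectivity follows from the hypothesis on $f\mapsto\phi(v)\pi(f)$. The paper instead exploits the $C_0(\widehat{A}\,)$-algebra structure: it regards both $\csLA{c}$ and $C^*(\phi,\pi)$ as bundles over $\widehat{A}$, checks that $\phi\times\pi$ is a morphism of $C_0(\widehat{A}\,)$-algebras, and shows it is isometric on each fibre by invoking the already-established gauge-invariant uniqueness theorem for $C^*(\Lambda,\chi\circ c)$ from \cite[Corollary~7.7]{KPS4} together with the fibre identification of Proposition~\ref{prp:extension algebra bundle}; fibrewise isometry then forces global isometry by \cite[Proposition~C.10(c)]{Williams:Crossedproducts07}. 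Your approach is more self-contained (it does not rely on the bundle picture or on the cited uniqueness theorem for the twisted algebras), whereas the paper's approach is shorter in context and reinforces the bundle viewpoint that drives the rest of the paper. One small notational quibble: $\Lambda^n v$ need not be finite, so $M_{|\Lambda^n v|}(D_v)$ should be read as $\Kk_{\Lambda^n v}\otimes D_v$; the argument goes through unchanged.
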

\begin{proof}
We may assume that $B = C^*(\phi, \pi)$, and hence that $\pi$ maps $C^*(A) =
C_0(\widehat{A}\,)$ into $\Zz\Mm(B)$. Thus $B$ is a $C_0(\widehat{A}\,)$-algebra. The series
$\sum_{w \in \Lambda^0} \phi(w)$ is an approximate identity for $B$. Hence each fibre
$B_\chi$ of $B$ is the quotient by the ideal generated by $\{\phi(w)\pi(f) : w \in
\Lambda^0, f \in C_0(\widehat{A}\,), f(\chi) = 0\}$.

Fix $\chi \in \widehat{A}$, and let $q_\chi : B \to B_\chi$ be the quotient map. Since $f
\mapsto \phi(w) \pi(f)$ induces a faithful representation of $C^*(A)$ we have
$q_\chi(\phi(w)) \not= 0$ for all $w \in \Lambda^0$ and $\chi \in \widehat{A}$. The
action $\beta$ descends to an action $\widetilde{\beta}$ of $\TT^k$ on $B_\chi$
satisfying $\widetilde{\beta}_z(q_\chi(\phi(\lambda))) =
z^{d(\lambda)}q_\chi(\phi(\lambda))$. The elements $\{q_\chi(\phi(\lambda)) : \lambda \in
\Lambda\}$ form a Cuntz-Krieger $(\Lambda, \chi \circ c)$-family. Thus the gauge-invariant
uniqueness theorem \cite[Corollary~7.7]{KPS4} implies that $s_\lambda \mapsto
q_\chi(\phi(\lambda))$ determines an isomorphism $C^*(\Lambda, \chi \circ c) \cong B_\chi$.
Proposition~\ref{prp:extension algebra bundle} implies that $\pi_\chi(i_\Lambda(\lambda)
i_A(f)) \mapsto \chi(f)s_\lambda$ determines an isomorphism $\csLA{c}_\chi \cong
C^*(\Lambda, \chi \circ c)$. Composing these two isomorphisms gives an isomorphism
$\csLA{c}_\chi \cong B_\chi$ which carries each $\pi_\chi(i_\Lambda(\lambda) i_A(f))$ to
$q_\chi(\phi(\lambda)\pi(f))$. Thus $\phi \times \pi$ is a fibrewise-isometric
homomorphism of $C_0(\widehat{A}\,)$-algebras; Proposition~C.10(c) of
\cite{Williams:Crossedproducts07} implies that $\phi \times \pi$ is isometric.
\end{proof}

\section{Lower semicontinuity}\label{app:lsc}

We adapt an argument due to Rieffel (see the proof of \cite[Theorem~2.5]{Rieffel:MA89})
to show that the bundles described in Proposition~\ref{prp:extension algebra bundle} are
continuous bundles.

We recall some background regarding $k$-graph groupoids; for more detail, see
\cite{KP2000, KPS4}. Fix a row-finite $k$-graph $\Lambda$ with no sources. There is a
groupoid $\Gg_\Lambda$ with unit space $\Lambda^\infty$, the space of infinite paths in
$\Lambda$, whose elements have the form $(\alpha x, d(\alpha) - d(\beta), \beta x)$ where
$x \in \Lambda^\infty$ and $\alpha,\beta \in \Lambda$ satisfy $s(\alpha) = s(\beta) =
r(x)$. We use lower-case fraktur letters $\mathfrak{a,b,c}$ for elements of
$\Gg_\Lambda$, and the letters $x,y,z$ for elements of $\Gg_\Lambda^{(0)} =
\Lambda^\infty$. For $\mu,\nu \in \Lambda *_s \Lambda := \{(\lambda, \mu) \in \Lambda
\times \Lambda : s(\mu) = s(\nu)\}$, we write $Z(\mu,\nu)$ for the basic compact open
subset $\{(\mu x, d(\mu) - d(\nu), \nu x) : x \in s(\mu) \Lambda^\infty\}$ of
$\Gg_\Lambda$. We showed in \cite[\S6]{KPS4} that there is a countable set $\Pp$ of pairs
$(\mu,\nu) \in \Lambda *_s \Lambda$ such that $(\lambda, s(\lambda)) \in \Pp$ for all
$\lambda \in \Lambda$ and $\Gg_\Lambda = \bigsqcup\{Z(\mu,\nu) : (\mu,\nu) \in \Pp\}$.
Fix $c \in \Zcat2(\Lambda, \TT)$. Lemma~6.3 of \cite{KPS4} yields a continuous groupoid
cocycle $\sigma_c$ on $\Gg_\Lambda$ and an isomorphism $C^*(\Lambda, c) \cong
C^*(\Gg_\Lambda, \sigma_c)$ which carries each $s_\lambda$ to the indicator function
$1_{Z(\lambda, s(\lambda))}$. We write $*_{\sigma_c}$ for the convolution product in
$C^*(\Gg_\Lambda, \sigma_c)$.

Regard $\Zcat2(\Lambda, \TT)$ as a topological subspace of $\TT^{\Lambda^{*2}}$. Let
$\ell^2(s)$ be the Hilbert $C_0(\Gg_\Lambda^{(0)})$-module completion of
$C_c(\Gg_\Lambda)$ with respect to the $C_0(\Gg_\Lambda^{(0)})$-valued inner-product
\[\textstyle
\langle f, g \rangle_{\ell^2(s)}(x) = \sum_{s(\mathfrak{a}) = x} \overline{f(\mathfrak{a})} g(\mathfrak{a}).
\]

Let $R : C^*(\Gg_\Lambda, \sigma_c) \to C_0(\Gg_\Lambda^{(0)})$ be the faithful
conditional expectation such that $R(f) = f|_{\Gg_\Lambda^{(0)}}$ for $f \in
C_c(\Gg_\Lambda)$. The map $\langle f, g \rangle_{\sigma_c} := R(f^* *_{\sigma_c} g)$ is
a $C_0(\Gg_\Lambda^{(0)})$-valued inner product on $C_c(\Gg_\Lambda, \sigma_c)$. The
completion $H(\sigma_c)$ of $C_c(\Gg_\Lambda)$ in the corresponding norm is a
right-Hilbert $C_0(\Gg_\Lambda^{(0)})$-module. Given a right-Hilbert module $H$, let
$\Ll(H)$ denote the algebra of adjointable operators on $H$.

\begin{prop}\label{prp:strong cty}
Let $\Lambda$ be a row-finite $k$-graph with no sources, and suppose $c \in
\Zcat2(\Lambda, \TT)$. Then $\langle f, g\rangle_{\sigma_c} = \langle f,
g\rangle_{\ell^2(s)}$ for all $f, g \in C_c(\Gg_\Lambda)$. There is an injective
homomorphism $\pi_c: C^*(\Gg_\Lambda, \sigma_c)\to\Ll(\ell^2(s))$ such that $\pi_c(f)g =
f *_{\sigma_c} g$ for all $f,g \in C_c(\Gg_\Lambda)$. Suppose that $c_n \to c$ in
$\Zcat2(\Lambda,\TT)$. Then for all compact open bisections $U,V \subseteq \Gg_\Lambda$
and all $\varepsilon > 0$, there exists $N \in \NN$ such that $\|(\pi_{c_n}(1_U) -
\pi_c(1_U)) 1_V\| \le \varepsilon$ for all $n \ge N$. For each compact open bisection $U
\subseteq \Gg_\Lambda$, the map $c \mapsto \pi_c(1_{U})$ is strongly continuous from
$\Zcat2(\Lambda, \TT)$ to $\Ll(\ell^2(s))$.
\end{prop}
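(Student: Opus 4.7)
The inner-product identity is a direct computation. At a unit $x$, the composable pairs $(\mathfrak{b}, \mathfrak{c})$ with $\mathfrak{b}\mathfrak{c} = x$ are precisely those with $\mathfrak{c} = \mathfrak{b}^{-1}$ and $r(\mathfrak{b}) = x$. Expanding $f^*(\mathfrak{b}) = \overline{\sigma_c(\mathfrak{b}, \mathfrak{b}^{-1})}\,\overline{f(\mathfrak{b}^{-1})}$, the twist factor produced by convolution cancels with the one sitting inside $f^*$ via the cocycle identity applied at $(\mathfrak{b}, \mathfrak{b}^{-1}, \mathfrak{b})$ together with the normalisation of $\sigma_c$. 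Reindexing by $\mathfrak{a} := \mathfrak{b}^{-1}$ collapses the sum to $\sum_{s(\mathfrak{a}) = x} \overline{f(\mathfrak{a})} g(\mathfrak{a}) = \langle f, g\rangle_{\ell^2(s)}(x)$. This identifies $H(\sigma_c)$ with $\ell^2(s)$ as Hilbert $C_0(\Gg_\Lambda^{(0)})$-modules, so left convolution by $f \in C_c(\Gg_\Lambda)$ defines the reduced regular representation of $C^*_r(\Gg_\Lambda, \sigma_c)$ on $\ell^2(s)$, which is faithful by definition. Amenability of $\Gg_\Lambda$ (established in \cite{KP2000}) identifies the reduced and full twisted groupoid algebras, yielding the desired injective $\pi_c$.

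For the uniform-bisection estimate I reduce to basic cylinders: every compact open bisection is a finite disjoint union of cylinder sets $Z(\mu, \nu)$, so I may assume $U = Z(\mu, \nu)$ and $V = Z(\alpha, \beta)$. Then $UV$ is itself a finite disjoint union of basic bisections indexed by minimal common extensions of $\nu$ and $\alpha$, and for each $\mathfrak{a} \in UV$ there is a unique composable pair $(\mathfrak{b}_\mathfrak{a}, \mathfrak{c}_\mathfrak{a}) \in U \times V$ with $\mathfrak{b}_\mathfrak{a} \mathfrak{c}_\mathfrak{a} = \mathfrak{a}$, so that $(\pi_c(1_U) 1_V)(\mathfrak{a}) = \sigma_c(\mathfrak{b}_\mathfrak{a}, \mathfrak{c}_\mathfrak{a})$. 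A function supported on a bisection has $\ell^2(s)$-norm equal to the supremum of its absolute values, so
\[
\|(\pi_{c_n}(1_U) - \pi_c(1_U)) 1_V\| = \sup_{\mathfrak{a} \in UV} \bigl|\sigma_{c_n}(\mathfrak{b}_\mathfrak{a}, \mathfrak{c}_\mathfrak{a}) - \sigma_c(\mathfrak{b}_\mathfrak{a}, \mathfrak{c}_\mathfrak{a})\bigr|.
\]
The main obstacle is to bound the right-hand side. I plan to invoke the explicit formula of \cite[Lemma~6.3]{KPS4}, which on each piece of $UV$ writes $\sigma_c(\mathfrak{b}_\mathfrak{a}, \mathfrak{c}_\mathfrak{a})$ as a finite combination of values $c(x_i, y_i)$ for pairs $(x_i, y_i) \in \Lambda^{*2}$ determined by $\mu, \nu, \alpha, \beta$ and the fixed representatives from $\Pp$, independently of $\mathfrak{a}$ and of $c$. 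Because the topology on $\Zcat2(\Lambda, \TT) \subseteq \TT^{\Lambda^{*2}}$ is pointwise convergence, $c_n \to c$ forces convergence at each of these finitely many pairs, which via the formula gives uniform convergence of $\sigma_{c_n}$ to $\sigma_c$ on $UV$.

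Strong continuity of $c \mapsto \pi_c(1_U)$ then follows by an $\varepsilon/3$-approximation: given $g \in \ell^2(s)$, first choose $g' \in C_c(\Gg_\Lambda)$ with $\|g - g'\|$ small, write $g' = \sum_i \lambda_i 1_{V_i}$ as a finite linear combination of indicators of compact open bisections, apply the previous step to each $V_i$, and absorb the tail using the uniform bound $\|\pi_c(1_U)\| \le 1$, which holds because $1_U$ is a partial isometry in every $C^*(\Gg_\Lambda, \sigma_c)$ whenever $U$ is a bisection.
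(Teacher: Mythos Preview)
Your proof is correct and follows essentially the same architecture as the paper's: compute the inner product directly, establish the injective representation $\pi_c$, use the explicit formula of \cite[Lemma~6.3]{KPS4} to see that $\sigma_c(\mathfrak{b}_\mathfrak{a},\mathfrak{c}_\mathfrak{a})$ is locally constant on $UV$ and given by finitely many values of $c$, and then run an $\varepsilon/3$-argument for strong continuity.

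The one genuine difference is your argument for injectivity of $\pi_c$. The paper constructs a $\TT^k$-action on $\Ll(\ell^2(s))$ intertwining $\pi_c$ with the gauge action and then invokes the gauge-invariant uniqueness theorem \cite[Corollary~7.7]{KPS4}. You instead observe that $\pi_c$ is precisely the reduced regular representation and appeal to amenability of $\Gg_\Lambda$ (established in \cite{KP2000}) to identify full and reduced twisted groupoid algebras. Your route is shorter and more conceptual; the paper's route has the mild advantage of being self-contained within the twisted-$k$-graph framework and not requiring the reader to know that amenability passes to twisted completions. A second, cosmetic difference: you first reduce $U$ and $V$ to basic cylinder sets and then glue back via the triangle inequality, whereas the paper works with arbitrary compact open bisections and carries out the compactness/partition argument directly on $UV$; both lead to the same finiteness needed for the uniform estimate.
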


\begin{proof}
Fix $c \in \Zcat2(\Lambda, \TT)$ and $x \in \Gg_\Lambda^{(0)}$. Then
\[\textstyle
\langle f, g\rangle_{\sigma_c}(x)
    = \sum_{\mathfrak{a}\mathfrak{b} = x} \sigma_c(\mathfrak{a}, \mathfrak{b})f^*(\mathfrak{a}) g(\mathfrak{b})
    = \sum_{s(\mathfrak{b}) = x} \sigma_c(\mathfrak{b}^{-1}, \mathfrak{b})
        \overline{\sigma_c(\mathfrak{b}^{-1}, \mathfrak{b})}\overline{f(\mathfrak{b})} g(\mathfrak{b}).
\]
Thus $\langle f, g\rangle_{\sigma_c} = \langle f, g\rangle_{\ell^2(s)}$. Hence $\id :
C_c(\Gg_\Lambda) \to C_c(\Gg_\Lambda)$ extends to an isometric isomorphism $\phi_c :
H(\sigma_c) \to \ell^2(s)$.

Multiplication in $C_c(\Gg_\Lambda, \sigma_c)$ extends to a left action of
$C^*(\Gg_\Lambda, \sigma_c)$ on $H(\sigma_c)$. Since the map $\phi_c : H(\sigma_c) \to
\Ll(\ell^2(s))$ is isometric for each $c$, there is a homomorphism $\pi_c :
C^*(\Gg_\Lambda, \sigma_c) \to \Ll(\ell^2(s))$ as claimed. The $\ZZ^k$-valued $1$-cocycle
$\tilde{d} : (x,m,y) \to m$ on $\Gg_\Lambda$ induces a strongly continuous $\TT^k$ action
on $C_c(\Gg_\Lambda)$ (called the gauge action) given by $\gamma_z(f)(\mathfrak{a}) =
z^{\tilde{d}(\mathfrak{a})} f(\mathfrak{a})$ for $f \in C_c(\Gg_\Lambda)$ and $z \in
\TT^k$. Let $\rho$ denote the $\TT^k$ action on $\Ll(\ell^2 (s))$ induced by the gauge
action on $C_c(\Gg_\Lambda)$, then $\rho_z \circ \pi_c = \pi_c \circ \gamma_z$ for all $z
\in \TT^k$. Since $C^*(\Gg_\Lambda, \sigma_c) \cong C^*(\Lambda, c)$, Corollary~7.7
of~\cite{KPS4} implies that $\pi_c$ is faithful.

Fix compact open bisections $U, V$  of $\Gg_\Lambda$. For $\mathfrak{a} \in UV$, let
$\mathfrak{a}^U \in U$ and $\mathfrak{a}^V \in V$ be the unique elements such that
$\mathfrak{a} = \mathfrak{a}^U \mathfrak{a}^V$. Then
\begin{align*}
(\pi_c(1_U) 1_V)(\mathfrak{a})
    &= (1_U *_{\sigma_c} 1_V)(\mathfrak{a}) \\
    &= \sum_{\mathfrak{b}\mathfrak{c} = \mathfrak{a}} \sigma_c(\mathfrak{b},\mathfrak{c}) 1_U(\mathfrak{b})1_V(\mathfrak{c})
    = \begin{cases}
        \sigma_c(\mathfrak{a}^U, \mathfrak{a}^V) &\text{if $\mathfrak{a} \in UV$} \\
        0 &\text{otherwise.}
    \end{cases}
\end{align*}
Recall from \cite[Lemma~6.3]{KPS4} that $(\mu_{\mathfrak{a}^U}, \nu_{\mathfrak{a}^U}),
(\mu_{\mathfrak{a}^V}, \nu_{\mathfrak{a}^V})$ and $(\mu_{\mathfrak{a}},
\nu_{\mathfrak{a}})$ denote the elements of $\Pp$ such that $\mathfrak{a}^U \in
Z (\mu_{\mathfrak{a}^U}, \nu_{\mathfrak{a}^U})$, $\mathfrak{a}^V \in
Z (\mu_{\mathfrak{a}^V}, \nu_{\mathfrak{a}^V})$ and $\mathfrak{a} \in
Z (\mu_{\mathfrak{a}}, \nu_{\mathfrak{a}})$. There exist $\alpha,\beta, \gamma \in
\Lambda$ and $y \in \Lambda^\infty$ such that
\begin{equation}\label{eq:alphabetagamma}
    \begin{split}
    \mathfrak{a}^U = (\mu_{\mathfrak{a}^U}\alpha y, \widetilde{d}({\mathfrak{a}^U}), &\nu_{\mathfrak{a}^U}\alpha y),\quad
    \mathfrak{a}^V = (\mu_{\mathfrak{a}^V}\beta y, \widetilde{d}({\mathfrak{a}^V}), \nu_{\mathfrak{a}^V}\beta y),\quad\text{and}\\
    &\mathfrak{a} = (\mu_{\mathfrak{a}}\gamma y, \widetilde{d}(\mathfrak{a}), \nu_{\mathfrak{a}}\gamma y).
    \end{split}
\end{equation}
Hence $W_0 := U \cap Z(\mu_{\mathfrak{a}^U}\alpha, \nu_{\mathfrak{a}^U}\alpha)$ is a
compact open neighbourhood of $\mathfrak{a}^U$ and $W_1 := V \cap
Z(\mu_{\mathfrak{a}^V}\beta, \nu_{\mathfrak{a}^V}\beta)$ is a compact open neighbourhood
of $\mathfrak{a}^V$. We have $s(W_0) = r(W_1)$ and $W = W_0W_1$ is a neighbourhood of
$\mathfrak{a}$ contained in $UV \cap Z(\mu_{\mathfrak{a}}\gamma,
\nu_{\mathfrak{a}}\gamma)$. For $\mathfrak{b} \in W$, we have
\[
\mu_{\mathfrak{b}^U} = \mu_{\mathfrak{a}^U},\ \nu_{\mathfrak{b}^U} = \nu_{\mathfrak{a}^U},\ \mu_{\mathfrak{b}^V}
    = \mu_{\mathfrak{a}^V},\ \nu_{\mathfrak{b}^V} = \nu_{\mathfrak{a}^V}, \mu_{\mathfrak{b}} = \mu_{\mathfrak{a}},
        \text{ and }\nu_{\mathfrak{b}} = \nu_{\mathfrak{a}},
\]
and equations~\eqref{eq:alphabetagamma} are satisfied with $\mathfrak{b}$ in place of
$\mathfrak{a}$ and the same paths $\alpha,\beta,\gamma$ but a different $y$. In
particular, by \cite[Lemma~6.3]{KPS4},
\[
\sigma_c(\mathfrak{b}^U, \mathfrak{b}^V) = \sigma_c(\mathfrak{a}^U, \mathfrak{a}^V) =
    c(\mu_{\mathfrak{a}^U}, \alpha)\overline{c(\nu_{\mathfrak{a}^U}, \alpha)}c(\mu_{\mathfrak{b}^V}, \beta)\overline{c(\nu_{\mathfrak{b}^V}, \beta)}
                        c(\nu_{\mathfrak{a}}, \gamma)\overline{c(\mu_{\mathfrak{a}}, \gamma)}.
\]
By compactness of $U, V$ and $UV$, there is a finite collection $\Ww$ of mutually
disjoint compact open bisections such that $UV = \bigcup \Ww$, and for $W \in \Ww$ there
exist $\mu_W$, $\nu_W$, $\eta_W$, $\zeta_W$, $\sigma_W$, $\tau_W$, $\alpha_W$, $\beta_W$,
and $\gamma_W$ in $\Lambda$ such that, for all $\mathfrak{a} \in W$,
\[
(\pi_c(1_U) 1_V)(\mathfrak{a})
    = c(\mu_W, \alpha_W)\overline{c(\nu_W, \alpha_W)}c(\eta_W, \beta_W)\overline{c(\zeta_W, \beta_W)}
                        c(\sigma_{W}, \gamma_W)\overline{c(\tau_{W}, \gamma_W)}.
\]
For each $W \in \Ww$ let $U_W$ and $V_W$ be the subsets of $U$ and $V$ such that $W = U_W
V_W$ (and $s(U_W) = r(V_W)$).  For each $W$,
\begin{equation}
\pi_c(1_{U_W})1_{V_W} = c(\mu_W, \alpha_W)\overline{c(\nu_W, \alpha_W)}c(\eta_W, \beta_W)\overline{c(\zeta_W, \beta_W)}
                        c(\sigma_{W}, \gamma_W)\overline{c(\tau_{W}, \gamma_W)} 1_W.\label{eq:pi_c action}
\end{equation}
Now suppose that $c_n \to c$ in $\Zcat2(\Lambda, \TT)$ and fix $\varepsilon > 0$. Since
multiplication in $\TT$ is continuous, \eqref{eq:pi_c action} implies that there exists
$N$ such that $n \ge N$ implies $\|\pi_{c_n}(1_{U_W})1_{V_W} - \pi_c(1_{U_W})1_{V_W}\| <
\varepsilon/|\Ww|$ for each $W \in \Ww$. Since $\bigcup \Ww$ is a bisection, given any
collection $\{a_W : W \in \Ww\}$ of scalars, we have
\[
\Big\|\sum_{W \in \Ww} a_W 1_W\Big\|^2 = \sup_{x \in \Gg_\Lambda^{(0)}} \sum_{W, W' \in \Ww}
    \overline{a_W} a_{W'} 1_{W^{-1} W'}(x) = \max_{W \in \Ww} |a_W|^2.
\]
That $\pi_c(1_U)1_V = \sum_{W \in \Ww} \pi_c(1_{U_W})1_{V_W}$ gives $\|\pi_{c_n}(1_U)1_V
- \pi_c(1_U)1_V\| < \varepsilon$ for all $n \ge N$.

Fix $f \in \ell^2(s)$ and $\varepsilon > 0$. There is a finite set $\Vv$ of compact open
bisections and a linear combination $f_0 := \sum_{V \in \Vv} a_V 1_V$ such that $\|f -
f_0\| \le \varepsilon/3$. The preceding paragraph yields $N$ such that
$\|\pi_{c_n}(1_U)1_V - \pi_c(1_U)1_V\| < \frac{\varepsilon}{3|\Vv|\,|a_V|}$ for $V \in
\Vv$ and $n \ge N$. For $n \ge N$,
\begin{align*}
\|\pi_{c_n}(1_U)f - \pi_c(1_U)f\|
    \le \|\pi_{c_n}(1_U)f - \pi_{c_n}(1_U)f_0\|
        &{}+ \|\pi_{c_n}(1_U)f_0 - \pi_c(1_U)f_0\| \\
    &\quad{} + \|\pi_c(1_U)f_0 - \pi_c(1_U)f\|
    \le \varepsilon
\end{align*}
since $\|\pi_c(1_U)\| \le \|1_U\| = 1$. Hence $c \mapsto \pi_c(1_U)$ is strongly
continuous.
\end{proof}

For the following corollary, recall that the isomorphism $C^*(\Lambda, c) \cong
C^*(\Gg_\Lambda, \sigma_c)$ of \cite[Theorems 6.7 and 7.9]{KPS4} carries each $s_\lambda
s^*_\mu$ to a finite linear combination, with coefficients in $\TT$, of elements of the
form $1_{Z(\lambda\alpha,\mu\alpha)}$.

\begin{cor}\label{cor:lower semicontinuity}
Resume the hypotheses of Proposition~\ref{prp:strong cty}. If $a : \Lambda *_s \Lambda
\to \CC$ has finite support, then $S_a : c \mapsto \sum_{\lambda,\mu} a(\lambda,\mu)
\pi_c(s_\lambda s^*_\mu)$ is strongly continuous from $\Zcat2(\Lambda, \TT)$ to
$\Ll(\ell^2(s))$, and $c \mapsto \|S_a(c)\|$ is lower-semicontinuous from
$\Zcat2(\Lambda, \TT)$ to $[0,\infty)$.
\end{cor}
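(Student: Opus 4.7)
The plan is to reduce $S_a(c)$ to a finite sum of operators of the form $\pi_c(1_U)$ (to which Proposition~\ref{prp:strong cty} applies directly) with scalar coefficients that depend continuously on $c$, and then deduce lower semicontinuity of the norm from the standard supremum characterisation.

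First, I would use the statement recalled in the paragraph preceding the corollary: under the isomorphism $C^*(\Lambda,c)\cong C^*(\Gg_\Lambda,\sigma_c)$, each $s_\lambda s^*_\mu$ is a finite linear combination, with coefficients in $\TT$, of indicator functions $1_{Z(\lambda\alpha,\mu\alpha)}$ for $\alpha$ ranging over a finite set determined by $\lambda,\mu$ alone (independent of $c$). Tracing through \cite[Lemma~6.3]{KPS4} and the convolution formula for $*_{\sigma_c}$, each such coefficient is an explicit product of values of $c$ and their complex conjugates on pairs in $\Lambda *_s \Lambda$, and hence depends continuously on $c\in\Zcat2(\Lambda,\TT)$ (equipped with the subspace topology from $\TT^{\Lambda^{*2}}$). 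Consequently there are finite sets of compact open bisections $U_j\subseteq\Gg_\Lambda$, independent of $c$, and continuous functions $t_j:\Zcat2(\Lambda,\TT)\to\CC$ such that
\[
S_a(c)=\sum_j t_j(c)\,\pi_c(1_{U_j}).
\]

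Next, for strong continuity I would fix $\xi\in\ell^2(s)$ and write
\[
S_a(c)\xi=\sum_j t_j(c)\,\pi_c(1_{U_j})\xi.
\]
Proposition~\ref{prp:strong cty} gives that $c\mapsto \pi_c(1_{U_j})\xi$ is norm-continuous in $c$ for each $j$, and $t_j$ is continuous, so each summand is continuous and the finite sum is continuous. This yields strong continuity of $S_a$ from $\Zcat2(\Lambda,\TT)$ to $\Ll(\ell^2(s))$.

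Finally, for lower semicontinuity of the norm I would use the standard fact that for an adjointable operator $T$ on a Hilbert module, $\|T\|=\sup\{\|T\xi\|:\xi\in\ell^2(s),\,\|\xi\|\le 1\}$. For each such $\xi$, strong continuity of $S_a$ shows that $c\mapsto\|S_a(c)\xi\|$ is continuous. The supremum of a family of continuous real-valued functions is lower semicontinuous, so $c\mapsto\|S_a(c)\|$ is lower semicontinuous. The only delicate point is verifying that the coefficients of the expansion of $\pi_c(s_\lambda s^*_\mu)$ depend continuously on $c$; once that bookkeeping (via \cite[Lemma~6.3]{KPS4}) is in hand, everything else is immediate from Proposition~\ref{prp:strong cty}.
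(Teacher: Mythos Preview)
Your proposal is correct and follows essentially the same route as the paper: expand each $s_\lambda s^*_\mu$ as a finite $\TT$-linear combination of indicator functions $1_{Z(\lambda\alpha,\mu\alpha)}$, apply Proposition~\ref{prp:strong cty} to each $\pi_c(1_U)$, and then deduce lower semicontinuity of the norm from strong continuity via the supremum formula. You are in fact slightly more careful than the paper on one point: you explicitly record that the scalar coefficients $t_j(c)$ in the decomposition depend on $c$ and vary continuously in the product topology, whereas the paper's write-up absorbs these coefficients into a relabelled $\tilde a$ and then estimates $\|(S_a(c_n)-S_a(c))x\|$ as though $\tilde a$ were independent of $c$; your formulation closes that small bookkeeping gap cleanly.
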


\begin{proof}
Fix a finitely supported function $a$, and suppose that $c_n \to c$ in
$\Zcat2(\Lambda,\TT)$. Identify $C^*(\Lambda, c)$ with $C^*(\Gg_\Lambda, \sigma_c)$ as in
\cite[Theorems 6.7 and 7.9]{KPS4}. We may then assume that each $\pi_c(s_\lambda s^*_\mu)
= \sum_{\tau \in F} z_{\tau} 1_{Z(\lambda\tau,\mu\tau)}$ for some finite set $F$ and $z_{\tau} \in \TT$. By
relabelling we may assume that $S_a(c) = \sum_{\lambda,\mu}
\tilde{a}(\lambda,\mu)\pi_c(1_{Z(\lambda,\mu)})$ where $\tilde{a}$ has finite support.
Fix $x \in \ell^2(s)$ of norm 1 and $\varepsilon > 0$. Let $M := \sum_{\lambda,\mu}
|\tilde{a}_{\lambda,\mu}|$. Whenever $\tilde{a}_{\lambda,\mu} \not= 0$,
Proposition~\ref{prp:strong cty} yields $N_{\lambda,\mu} \in \NN$ such that
$\big\|\big(\pi_{c_n}(1_{Z(\lambda,\mu)}) - \pi_c (1_{Z(\lambda,\mu)})\big) x\big\| \le
\frac{\varepsilon}{M}$. Let $N := \max_{\lambda,\mu} N_{\lambda,\mu}$. For $n \ge N$, we
have $\|(S_a(c_n) - S_a(c))x\| \le \frac{\varepsilon}{M} \sum_{\lambda,\mu} |\tilde{a}_{\lambda,\mu}|  \le
\varepsilon$. Thus $S_a$ is strongly continuous.

To see that $c \mapsto \|S_a(c)\|$ is lower-semicontinuous, fix $\varepsilon > 0$. We
must show that there exists $N \in \NN$ such that $\|S_a(c_n)\| \ge \|S_a(c)\| -
\varepsilon$ whenever $n \ge N$. For this, fix $x \in \ell^2(s)$ such that $\|x\| = 1$
and $\|S_a(c) x\| \ge \|S_a(c)\| - \varepsilon/2$. By the preceding paragraph there
exists $N \in \NN$ such that $n \ge N$ implies $\|S_a(c_n) x - S_a(c)x\| \le
\varepsilon/2$, and in particular
\[
    \|S_a(c_n)\| \ge \|S_a(c_n)x\| \ge \|S_a(c) x\| - \varepsilon/2 \ge \|S_a(c)\| - \varepsilon.
\]

The final assertion is clear because each $C^*(\Lambda, c) = \clsp\{s_\lambda s^*_\mu :
\lambda,\mu \in \Lambda\}$.
\end{proof}

\begin{cor}\label{cor:cts field}
Let $\Lambda$ be a row-finite $k$-graph with no sources, let $A$ be an abelian group, and
let $c \in \Zcat2(\Lambda,A)$. The upper-semicontinuous $C^*$-bundle
over $\widehat{A}$ with fibres $\pi_\chi(\csLA{c}) \cong C^*(\Lambda, \chi \circ c)$
described in Proposition~\ref{prp:extension algebra bundle} is continuous.
\end{cor}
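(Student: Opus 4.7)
The plan is to combine the upper-semicontinuity from Proposition~\ref{prp:extension algebra bundle} with lower-semicontinuity coming out of Corollary~\ref{cor:lower semicontinuity}. By the characterisation of continuous $C^*$-bundles (e.g.\ \cite[Proposition~C.10]{Williams:Crossedproducts07}), it suffices to show that for each $b \in \csLA{c}$ the map $\chi \mapsto \|\pi_\chi(b)\|$ is lower semicontinuous on $\widehat{A}$. Since uniform limits of lower-semicontinuous functions are lower semicontinuous and $\bigl|\|\pi_\chi(b_n)\| - \|\pi_\chi(b)\|\bigr| \le \|b_n - b\|$, the set of $b$ for which this holds is closed in $\csLA{c}$. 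By~\eqref{eq:univ spanners}, it therefore suffices to treat a finite sum $b = \sum_{i=1}^N i_\Lambda(\lambda_i) i_A(f_i) i_\Lambda(\mu_i)^*$ with $f_i \in C_c(\widehat{A}\,)$, for which $\pi_\chi(b) = \sum_i f_i(\chi) s_{\lambda_i} s_{\mu_i}^*$ inside $C^*(\Lambda, \chi \circ c)$.

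Next, I would identify $C^*(\Lambda, \chi \circ c)$ with $C^*(\Gg_\Lambda, \sigma_{\chi \circ c})$ and apply the injective representation $\pi_{\chi \circ c} : C^*(\Gg_\Lambda, \sigma_{\chi \circ c}) \to \Ll(\ell^2(s))$ of Proposition~\ref{prp:strong cty}. This turns the problem into showing lower semicontinuity of $\chi \mapsto \|T(\chi)\|$, where $T(\chi) := \sum_i f_i(\chi)\, \pi_{\chi \circ c}(s_{\lambda_i} s_{\mu_i}^*) \in \Ll(\ell^2(s))$. As observed before Corollary~\ref{cor:lower semicontinuity}, each $\pi_{c'}(s_\lambda s_\mu^*)$ is a finite $\TT$-linear combination of indicator functions $1_{Z(\lambda\tau, \mu\tau)}$ whose coefficients are products of values of $c'$, so $T(\chi)$ is a finite linear combination of the operators $\pi_{\chi \circ c}(1_{Z(\lambda\tau, \mu\tau)})$ with coefficients that depend continuously on $\chi$ (through $f_i(\chi)$ and through the values $\chi(c(\cdot,\cdot))$).

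The key observation is that $\chi \mapsto \chi \circ c$ is continuous from $\widehat{A}$ into $\Zcat2(\Lambda, \TT) \subseteq \TT^{\Lambda^{*2}}$, since pointwise evaluation of characters is continuous. Combining this with the strong continuity statement of Proposition~\ref{prp:strong cty} (applied to each of the finitely many bisections $Z(\lambda\tau, \mu\tau)$ occurring) and the continuity of the scalar coefficients, the same three-term approximation argument used in the proofs of Proposition~\ref{prp:strong cty} and Corollary~\ref{cor:lower semicontinuity} shows that $\chi \mapsto T(\chi)v$ is norm-continuous from $\widehat{A}$ to $\ell^2(s)$ for every $v \in \ell^2(s)$.

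Given strong continuity of $T$, lower semicontinuity of $\chi \mapsto \|T(\chi)\|$ follows verbatim from the second half of the proof of Corollary~\ref{cor:lower semicontinuity}: fix $\chi_0$ and $\varepsilon > 0$, choose a unit vector $v$ with $\|T(\chi_0)v\| \ge \|T(\chi_0)\| - \varepsilon/2$, and use strong continuity to find a neighbourhood of $\chi_0$ on which $\|T(\chi)\| \ge \|T(\chi)v\| \ge \|T(\chi_0)\| - \varepsilon$. The main obstacle is simply keeping track of the two independent sources of $\chi$-dependence (in $f_i(\chi)$ and in $\chi \circ c$) when adapting the argument of Corollary~\ref{cor:lower semicontinuity}; once one sets up the strong-continuity bookkeeping, the passage from operator norm to bundle norm and the closure-under-uniform-limits reduction are routine.
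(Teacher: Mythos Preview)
Your proof is correct and takes essentially the same approach as the paper: observe that $\chi \mapsto \chi \circ c$ is continuous from $\widehat{A}$ into $\Zcat2(\Lambda,\TT)$, feed this into the strong-continuity/lower-semicontinuity machinery of Proposition~\ref{prp:strong cty} and Corollary~\ref{cor:lower semicontinuity}, and combine with the upper semicontinuity already established in Proposition~\ref{prp:extension algebra bundle}. The paper's version is shorter because it applies Corollary~\ref{cor:lower semicontinuity} directly to the constant-coefficient sums $\sum_{\lambda,\mu} a(\lambda,\mu)\, i_\Lambda(\lambda)i_\Lambda(\mu)^*$, noting that $\big\|\pi_\chi\big(\sum a(\lambda,\mu)\, i_\Lambda(\lambda)i_\Lambda(\mu)^*\big)\big\| = \|S_a(\chi\circ c)\|$, and so does not re-derive strong continuity with the extra $f_i(\chi)$-dependence you carry through; your longer route has the mild virtue of working directly with the genuine spanning elements of $\csLA{c}$ rather than with multiplier-type elements, making explicit a passage the paper leaves implicit.
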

\begin{proof}
The map $\chi \mapsto \chi \circ c$ from $\widehat{A}$ to $\Zcat2(\Lambda,\TT)$ is
continuous. Let $\{s_\lambda : \lambda \in \Lambda\}$ be the generators of $C^*(\Lambda,
\chi \circ c)$. Proposition~\ref{prp:strong cty} implies that for a finitely supported
function $a : \Lambda *_s \Lambda \to \CC$, we have $\|\sum a(\lambda,\mu) s_\lambda
s^*_\mu\| = \|S_a(\chi \circ c)\|$. It therefore follows from Corollary~\ref{cor:lower
semicontinuity} that $\chi \mapsto \big\|\pi_\chi\big(\sum a(\lambda,\mu)
i_\Lambda(\lambda) i_\Lambda(\mu)^*\big)\big\|$ is lower semicontinuous. Since
Proposition~\ref{prp:extension algebra bundle} implies that it is also
upper-semicontinuous, the result follows.
\end{proof}

\begin{example}[Fields of noncommutative tori]\label{eg:nct}
Fix $k \ge 2$. Let $T_k$ be a copy of $\NN^k$ regarded as a $k$-graph with degree functor
the identity map on $\NN^k$. Let $A$ be the free abelian group generated by $\{(i,j) : 1
\le j < i \le k\}$. There is an $A$-valued 2-cocycle $c$ on $T_k$ given by
\[\textstyle
c(m,n) := \sum_{j < i} m_in_j \cdot (i, j).
\]
Using relations (\ref{it:LArep1})--(\ref{it:LArep5}), one can check that $C^*(T_k, A, c)$
is universal for unitaries $\{U_m : m \in \ZZ^k\} \cup \{W_a : a \in A\}$ such that, for
$m, n \in T_k$ and $a,b \in A$,
\[
W_a W_b = W_{a+b},\qquad U_m U_n = W_{c(m,n)} U_{m+n}, \qquad\text{and}\qquad U_mW_a = W_aU_m.
\]
Hence $U_{e_i} U_{e_j} = W_{(i,j)} U_{e_i + e_j} = W_{(i,j)} U_{e_j} U_{e_i}$ for $j <
i$. Let $G$ be the group generated by $\{g_1, \dots, g_k\} \cup \{h_{i, j} : 1 \le j < i
\le k \}$ such that $g_ig_j = h_{i, j}g_jg_i$ and $g_\ell h_{i, j} = h_{i, j}g_\ell$ for
all $i, j, \ell$ with $j < i$. Then $C^*(T_k, A, c)$ and $C^*(G)$ have the same universal
property, so coincide.

Fix scalars $z_{i,j} \in \TT$ for $1 \le j < i \le k$. The noncommutative torus $A_z$ is
the universal $C^*$-algebra generated by unitaries $U_1, \dots, U_k$ subject to $U_iU_j =
z_{i,j} U_jU_i$ for $j < i$ (see, for example, \cite[1.4]{Elliott:Neptun84}). There is a
unique character $\chi_z$ of $A$ satisfying $\chi_z(i, j) = z_{i,j}$ for $j < i$. The
twisted $k$-graph $C^*$-algebra $C^*(T_k, \chi_z \circ c)$ is the quotient of $C^*(T_k,
A, c)$ by the ideal generated by $\{W_{(i, j)} - z_{i,j} 1 : j < i\}$. Hence the
preceding paragraph implies that $C^*(T_k, \chi_z \circ c)$ is the noncommutative torus
$A_z$. Thus Corollary~\ref{cor:cts field} shows that $C^*(T_k , A , c)$ is the algebra of
sections of a continuous bundle over $\prod_{j < i \le k} \TT$ whose fibre over $z$ is
$A_z$.

When $k = 2$, the group $G$ is the integer Heisenberg group $H_3(\ZZ)$, and we recover
the description of $C^*(H_3(\ZZ))$ as the sections of a continuous field of rotation
algebras \cite{AndersonPaschke:HJM89}.
\end{example}

\begin{example}[Fields of Heegaard-type quantum spheres]\label{eg:Heegaard bundle}
Consider the Heegaard-type quantum spheres $C(S^3_{00\theta})$ of \cite{BHMS}. It was
shown in \cite[Example~7.10]{KPS3} that these $C^*$-algebras are the twisted
$C^*$-algebras of a finite $2$-graph $\Lambda$, and that $\Hcat2(\Lambda, \TT) = \TT$,
with the cohomology class corresponding to $e^{2\pi \imath \theta}$ represented by
$c_\theta : (\mu,\nu) \mapsto e^{2\pi \imath(d(\mu)_2 d(\nu)_1)\theta}$. We then have
$C(S^3_{00\theta}) \cong C^*(\Lambda, c_\theta)$.

Let $b \in \Zcat2(\Lambda, \ZZ)$ be the cocycle $b(\mu,\nu) = d(\mu)_2 d(\nu)_1$.
Identifying $\TT$ with $\widehat{\ZZ}$ by $z \mapsto (\chi_z : n \mapsto z^n)$, each
$c_\theta$ is equal to $\chi_{e^{2\pi\imath\theta}} \circ b$. Thus Corollary~\ref{cor:cts
field} shows that $\csLA{c_\theta}$ is the section algebra of a continuous field over
$\TT$ of Heegaard-type quantum spheres.
\end{example}

\section{A field of AF algebras}\label{sec:AFbundle}
If $\Lambda$ is a $k$-graph and $b$ is a function from $\Lambda^0$ to $A$, then the
corresponding $1$-coboundary $\dcat0b : \Lambda \to A$ is given by $\dcat0b(\lambda) =
b(s(\lambda)) - b(r(\lambda))$.

Suppose that the degree map on $\Lambda$ satisfies $d = \dcat{0}b$ for some $b :
\Lambda^0 \to \ZZ^k$. By \cite[Lemma 5.4]{KP2000} it follows that $C^*(\Lambda)$ is AF;
and \cite[Theorem 8.4]{KPS4} says that for any $c \in \Zcat2(\Lambda, \TT)$ the twisted
algebra $C^*(\Lambda, c)$ is isomorphic to $C^*(\Lambda)$. In Theorem~\ref{thm:dirlim
tensor} we combine these results with the bundle structure of $C^*(\Lambda, A, c)$ given
in Proposition~\ref{prp:extension algebra bundle} to show that $\csLA{c} \cong C^*(A)
\otimes C^*(\Lambda)$ for every $c \in \Zcat2(\Lambda, A)$.

Given a nonempty set $X$, we write $\Kk_X$ for the unique nonzero $C^*$-algebra generated
by elements $\{\theta_{x,y} : x,y \in X\}$ satisfying $\theta_{x,y}^* = \theta_{y,x}$ and
$\theta_{x,y}\theta_{w,z} = \delta_{y,w} \theta_{x,z}$. We call the $\theta_{x,y}$
\emph{matrix units} for $\Kk_X$.

\begin{lem}\label{lem:compact bundle}
Let $\Lambda$ be a row-finite $k$-graph with no sources and suppose that $d = \dcat{0}b$
for some $b : \Lambda^0\to \ZZ^k$. Let $A$ be an abelian group, and fix $c \in
\Zcat2(\Lambda, A)$. For each $n \in \ZZ^k$, the subspace $B_n :=
\clsp\{i_\Lambda(\lambda) i_A(f) i_\Lambda(\mu)^* : s(\lambda) = s(\mu) \in b^{-1}(n), f
\in C^*(A)\}$ of $\csLA{c}$ is a $C^*$-subalgebra. The formula $f \otimes
\theta_{\lambda, \mu} \mapsto i_\Lambda(\lambda) i_A(f) i_\Lambda(\mu)^*$ determines an
isomorphism $\rho_n : C^*(A) \otimes \big(\bigoplus_{b(v) = n} \Kk_{\Lambda v}\big) \to
B_n$.
\end{lem}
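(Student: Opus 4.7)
The key to the whole lemma is a matrix-unit identity forced by the coboundary hypothesis on $d$; once this is in hand, $\rho_n$ is constructed by universal properties and injectivity becomes a fibrewise check.

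First I would establish the crucial identity $i_\Lambda(\mu)^* i_\Lambda(\eta) = \delta_{\mu, \eta} i_\Lambda(s(\mu))$ whenever $s(\mu), s(\eta) \in b^{-1}(n)$. Applying \eqref{eq:multiplication}, this reduces to analysing $\MCE(\mu, \eta)$: if $\mu\alpha = \eta\beta$, then $d = \dcat{0}b$ gives $b(s(\alpha)) = n + d(\mu)\vee d(\eta) - d(\mu)$ and $b(s(\beta)) = n + d(\mu)\vee d(\eta) - d(\eta)$. These describe the same vertex, so $d(\mu) = d(\eta)$, which forces $d(\alpha) = d(\beta) = 0$; then $\alpha, \beta$ are vertices and $\mu\alpha = \eta\beta$ collapses to $\mu = \eta$. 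Since $c(\mu, s(\mu)) = 0$, the sum in \eqref{eq:multiplication} reduces to $i_\Lambda(s(\mu))$. Combined with (\ref{it:LArep1}), this shows that products of spanners of $B_n$ stay in $B_n$, so $B_n$ is a $C^*$-subalgebra (closure under $*$ is obvious).

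Next I would define $e^v_{\lambda, \mu} := i_\Lambda(\lambda) i_\Lambda(\mu)^*$ for $v \in b^{-1}(n)$ and $\lambda, \mu \in \Lambda v$; the calculation above makes these matrix units in $B_n$, mutually orthogonal across distinct $v$. The universal property of $\Kk_{\Lambda v}$ yields a $*$-homomorphism $\iota_v : \Kk_{\Lambda v} \to B_n$, and it is nonzero because $\iota_v(\theta_{v,v}) = i_\Lambda(v)$ maps to $s_v \neq 0$ under any $\pi_\chi$ from Proposition~\ref{prp:extension algebra bundle}, hence injective by simplicity of $\Kk_{\Lambda v}$. Orthogonality assembles the $\iota_v$ into $\iota : \bigoplus_{b(v) = n} \Kk_{\Lambda v} \hookrightarrow B_n$, and (\ref{it:LArep1}) makes the image of $\iota$ commute with $i_A(C^*(A))$. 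Since $C^*(A)$ is commutative, $f \otimes x \mapsto i_A(f)\iota(x)$ extends unambiguously to a $*$-homomorphism $\rho_n : C^*(A) \otimes \bigoplus_v \Kk_{\Lambda v} \to B_n$ with the stated formula; surjectivity is immediate from the spanning description of $B_n$.

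For injectivity I would use the $C_0(\widehat{A}\,)$-algebra structure: both sides are $C_0(\widehat{A}\,)$-algebras (domain via the first tensor factor, codomain via restriction of $i_A$) and $\rho_n$ is $C_0(\widehat{A}\,)$-linear, so by \cite[Proposition~C.10(c)]{Williams:Crossedproducts07} it suffices to check fibrewise injectivity. At a character $\chi$, the fibre map sends $\theta_{\lambda, \mu} \mapsto s_\lambda s_\mu^*$ in $C^*(\Lambda, \chi \circ c)$; repeating the initial calculation inside $C^*(\Lambda, \chi \circ c)$ shows these are nonzero matrix units, so each simple summand $\Kk_{\Lambda v}$ embeds, with orthogonal ranges. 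The main obstacle is the first step: the coboundary hypothesis on $d$ is what forces $\mu = \eta$ in $\MCE(\mu, \eta)$ and pins down the compact-operator block structure --- without it $B_n$ would not have this form.
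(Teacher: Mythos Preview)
Your proof is correct and follows essentially the same route as the paper's: establish that the $i_\Lambda(\lambda) i_\Lambda(\mu)^*$ with $s(\lambda)=s(\mu)\in b^{-1}(n)$ form nonzero matrix units (the paper cites \cite[Lemma~5.4]{KP2000} and \cite[Lemma~8.4]{KPS4} for this, whereas you write the $\MCE$ argument out explicitly), obtain $\rho_n$ from the universal property of the tensor product, and then verify injectivity fibrewise via the $C_0(\widehat{A}\,)$-algebra structure and \cite[Proposition~C.10(c)]{Williams:Crossedproducts07}. The only difference is level of detail, not strategy.
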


\begin{proof}
As in the proof of \cite[Lemma~5.4]{KP2000} (or \cite[Lemma~8.4]{KPS4}),
$\{i_\Lambda(\lambda) i_\Lambda(\mu)^* : s(\lambda) = s(\mu) \in b^{-1}(n)\}$ is a set of
nonzero matrix units, and so spans a copy of $\bigoplus_{b(v) = n} \Kk_{\Lambda v}$ in
$\Mm(\csLA{c})$. Each $i_\Lambda(\lambda) i_\Lambda(\mu)^*$ commutes with $i_A(C^*(A))$,
and so $B_n$ is a $C_0(\widehat{A}\,)$-algebra. The universal property of the tensor
product gives a (clearly surjective) homomorphism $\rho_n : C^*(A) \otimes
\big(\bigoplus_{b(v) = n} \Kk_{\Lambda v}\big) \to B_n$ such that $\rho_n(f \otimes
\theta_{\lambda,\mu}) = i_\Lambda(\lambda) i_A(f) i_{\Lambda}(\mu)^*$.

If $\chi \in \widehat{A}$ and $f \in C^*(A)$ satisfy $f(\chi) = 1$, then $\pi_\chi$
carries the $i_\Lambda(\lambda) i_A(f) i_\Lambda(\mu)^*$ to nonzero matrix units in
$C^*(\Lambda, \chi \circ c)$. Hence evaluation at $\chi$ induces an isomorphism
$\big(C^*(A) \otimes \big(\bigoplus_{b(v) = n} \Kk_{\Lambda v}\big)\big)_\chi \cong
(B_n)_\chi$. Thus $\rho_n$ is an isomorphism.
\end{proof}

The idea of the following proof is due to Ben Whitehead \cite{Whiteheadhonours}.

\begin{thm}\label{thm:dirlim tensor}
Let $\Lambda$ be a row-finite $k$-graph with no sources and suppose that $d
= \dcat{0}b$ for some $b : \Lambda^0\to \ZZ^k$. Let $A$ be an abelian group, and fix $c
\in \Zcat2(\Lambda, A)$. For $m \le n \in \NN^k$, the inclusion $j_{m,n} : B_m
\subseteq B_n$ is given by
\[
j_{m,n}(i_\Lambda(\lambda) i_A(f) i_\Lambda(\mu)^*)
    = \sum_{\nu \in s(\lambda)\Lambda^{n - m}} i_A(c(\lambda,\nu) - c(\mu,\nu))
        i_\Lambda(\lambda\nu) i_A(f) i_\Lambda(\mu\nu)^*.
\]
Furthermore, $\csLA{c} = \varinjlim(B_n, j_{m,n}) \cong C^*(A) \otimes C^*(\Lambda)$.
\end{thm}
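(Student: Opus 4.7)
The plan is to dispatch the three claims in the theorem in turn: the formula for $j_{m,n}$, the identification $\csLA{c} = \varinjlim(B_n, j_{m,n})$, and the isomorphism with $C^*(A) \otimes C^*(\Lambda)$.

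The formula is a direct calculation: start from $i_\Lambda(\lambda) i_A(f) i_\Lambda(\mu)^* = i_\Lambda(\lambda) i_A(f) i_\Lambda(s(\mu)) i_\Lambda(\mu)^*$, expand $i_\Lambda(s(\mu))$ by relation~(R5) as a sum over $\nu \in s(\mu)\Lambda^{n-m}$, and apply~(R3) to rewrite $i_\Lambda(\lambda) i_\Lambda(\nu) = i_A(c(\lambda,\nu)) i_\Lambda(\lambda\nu)$ and its adjoint $i_\Lambda(\nu)^* i_\Lambda(\mu)^* = i_A(-c(\mu,\nu)) i_\Lambda(\mu\nu)^*$; centrality of $i_A$ from~(R1) collects the scalars as $i_A(c(\lambda,\nu) - c(\mu,\nu))$, and $s(\lambda\nu) = s(\nu)$ has $b$-value $n$, so each summand lies in $B_n$. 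The identification $\csLA{c} = \varinjlim B_n$ then follows from the spanning formula~\eqref{eq:univ spanners}: a generator $i_\Lambda(\lambda) i_A(f) i_\Lambda(\mu)^*$ vanishes unless $s(\lambda) = s(\mu)$ (otherwise the central $i_A(f)$ is sandwiched between orthogonal vertex projections), and nonzero generators lie in some $B_n$, so $\bigcup_n B_n$ is dense in $\csLA{c}$.

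The substance is the isomorphism with $C^*(A) \otimes C^*(\Lambda)$. My plan is to produce a function $\eta : \Lambda \to A$ satisfying $\eta(\mu\nu) = \eta(\mu) + \eta(\nu) - c(\mu,\nu)$ for all composable pairs, equivalently $c = \dcat1(-\eta)$. Given such $\eta$, one route is to apply Lemma~\ref{eq:cohomologous exts} to get $\csLA{c} \cong \csLA{0}$, and then observe that $\csLA{0}$ is universal for commuting pairs consisting of a Cuntz--Krieger $\Lambda$-family and a representation of $C^*(A)$, hence $\csLA{0} \cong C^*(A) \otimes C^*(\Lambda)$ by nuclearity of $C^*(A)$. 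Alternatively, build the isomorphism directly: the prescriptions $\phi(\lambda) := i_A(\eta(\lambda)) \otimes s_\lambda$ and $\pi(f) := f \otimes 1$ give a $c$-representation of $(\Lambda,A)$ in $C^*(A) \otimes C^*(\Lambda)$ --- relation~(R3) unfolds exactly to the coboundary identity, and the remaining axioms are routine --- with $\phi \times \pi$ inverse to the map $C^*(A) \otimes C^*(\Lambda) \to \csLA{c}$ induced by $i_A$ together with the Cuntz--Krieger $\Lambda$-family $t_\lambda := i_A(-\eta(\lambda)) i_\Lambda(\lambda)$.

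The main obstacle is producing $\eta$ --- a cohomological vanishing statement for AF $k$-graphs. I would define $\eta$ inductively on $|d(\lambda)|$: set $\eta(v) := 0$ on vertices and $\eta(\lambda) := \eta(\mu) + \eta(\nu) - c(\mu,\nu)$ for any factorization $\lambda = \mu\nu$ with both pieces of positive degree. The $2$-cocycle identity handles consistency across different factorizations at degree $\ge 3$; at degree $e_i + e_j$ the commuting squares $e_1 e_2 = e'_2 e'_1$ impose a linear system on the edge-values of $\eta$. That system is solvable because $d = \dcat0 b$ forbids positive-degree closed paths and so kills the only potential global obstruction --- this is the same cohomological content underlying \cite[Theorem~8.4]{KPS4} for $\TT$-valued cocycles, whose argument passes to an arbitrary abelian $A$ with only notational changes.
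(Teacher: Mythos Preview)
Your handling of the inclusion formula and of $\csLA{c} = \varinjlim B_n$ is correct and matches the paper. For the isomorphism with $C^*(A) \otimes C^*(\Lambda)$ you take a different route from the paper, and that route has a gap.

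The paper does not show that $c$ is a coboundary. Instead it uses the isomorphisms $\rho_n$ of Lemma~\ref{lem:compact bundle} and constructs, from a recursively defined function $\kappa : \Lambda \to A$, explicit unitaries $U_h \in \Uu\Mm\big(C^*(A) \otimes \bigoplus_{b(v)=h\cdot\mathbf{1}} \Kk_{\Lambda v}\big)$ such that conjugation by the $U_h$ intertwines the untwisted connecting maps $\id_{C^*(A)} \otimes \iota_{h\cdot\mathbf{1},(h+1)\cdot\mathbf{1}}$ with $\rho^{-1} \circ j \circ \rho$. This yields an isomorphism of direct systems along the cofinal sequence $(h\cdot\mathbf{1})_{h\in\NN}$, hence of the limits. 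The argument is self-contained and constructive; note that $\kappa$ vanishes on every path of degree $\not\ge \mathbf{1}$, so $\dcat1\kappa \ne c$ in general --- the paper is \emph{not} secretly proving cohomology vanishing.

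Your plan --- produce $\eta$ with $c = \dcat1\eta$ and invoke Lemma~\ref{eq:cohomologous exts} --- is more conceptual, but the existence of such $\eta$ is precisely the statement $\Hcat2(\Lambda, A) = 0$, and you have not proved it. Your inductive scheme pushes the entire difficulty into the choice of edge-values: for each commuting square $ef = f'e'$ you need $\eta(e) + \eta(f) - \eta(f') - \eta(e') = c(e,f) - c(f',e')$, and ``no positive-degree closed paths'' is a $1$-dimensional acyclicity condition that does not by itself guarantee consistency of this degree-$2$ system. Your appeal to \cite[Theorem~8.4]{KPS4} is also off target: as described in this paper, that result establishes $C^*(\Lambda, c) \cong C^*(\Lambda)$ by the same matrix-unit/direct-limit mechanism, not by exhibiting $c$ as a coboundary. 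To salvage your approach you would need either a direct proof that $\Hcat2(\Lambda, A) = 0$ under $d = \dcat0 b$ (for instance via a contracting homotopy of the categorical cochain complex built from the grading $b$) or a precise citation for that vanishing.
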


\begin{proof}
The formula for the inclusion maps $j_{m,n}$ follows from~(\ref{it:LArep3}) and
(\ref{it:LArep5}). Each spanning element of $\csLA{c}$ belongs to $\bigcup_n B_n$, and
hence $\csLA{c} = \varinjlim(B_n, j_{m,n})$. For $n \in \NN^k$, let $\rho_n : C^*(A)
\otimes \big(\bigoplus_{b(v) = n} \Kk_{\Lambda v}\big) \to B_n$ be as in
Lemma~\ref{lem:compact bundle}. For $m \le n \in \NN^k$ let $\iota_{m,n} :
\bigoplus_{b(v) = m} \Kk_{\Lambda v} \to \bigoplus_{b(w) = n} \Kk_{\Lambda w}$ be the
inclusion $\iota_{m,n}(\theta_{\lambda,\mu}) = \sum_{\alpha \in s(\lambda)\Lambda^{n-m}}
\theta_{\lambda\alpha, \mu\alpha}$.

Let $\mathbf{1} := (1, \dots, 1) \in \NN^k$. Define $\kappa : \Lambda \to A$ recursively
by
\[
\kappa(\mu) =
\begin{cases}
0 & \text{if $d(\mu) \not\ge \mathbf{1}$,} \\
\kappa(\lambda) + c(\lambda, \alpha) & \text{if  } \mu = \lambda\alpha \text{ and } d(\alpha) = \mathbf{1}.
\end{cases}
\]

Let $u_A : A \to \Uu\Mm(C^*(A))$ denote the canonical map. For $h \in \NN$, an
$\varepsilon/3$-argument (see, for example, \cite{MaloneyPaskEtAl:xx11}) shows that
$\sum_{b(s(\lambda)) = h\cdot\mathbf{1}} u_A(\kappa(\lambda)) \otimes
\theta_{\lambda,\lambda}$ converges strictly to a unitary $U_h \in \Uu\Mm\big(C^*(A)
\otimes \big(\bigoplus_{b(v) = h\cdot\mathbf{1}} \Kk_{\Lambda v}\big)\big)$.

Fix $h \in \NN$ and $\lambda,\mu \in \Lambda$ with $s(\lambda) = s(\mu) \in
b^{-1}(h\cdot\mathbf{1})$. Then
\begin{align*}
j_{h\cdot\mathbf{1}, (h+1)\cdot\mathbf{1}} &\circ \rho_{h\cdot\mathbf{1}}
        (U_h (f \otimes \theta_{\lambda,\mu}) U^*_h) \\
    &=  j_{h\cdot\mathbf{1}, (h+1)\cdot\mathbf{1}}
    \big( i_A(\kappa(\lambda)-\kappa(\mu))i_\Lambda(\lambda) i_A(f) i_\Lambda(\mu)^*\big) \\
    &= \textstyle\sum_{\alpha \in s(\lambda)\Lambda^{\mathbf{1}}}
       i_A\big((\kappa(\lambda)+c(\lambda,\alpha)) - (\kappa(\mu)+c(\mu,\alpha))\big)
            i_\Lambda(\lambda\alpha) i_A(f) i_\Lambda(\mu\alpha)^*\\
    &= \textstyle\sum_{\alpha \in s(\lambda)\Lambda^{\mathbf{1}}}
        \rho_{(h+1)\cdot \mathbf{1}}\big(u_A\big(\kappa(\lambda\alpha) - \kappa(\mu\alpha)\big)
                f \otimes \theta_{\lambda\alpha,\mu\alpha}\big)\\
    &= \rho_{(h+1)\cdot \mathbf{1}} \big(U_{h+1}
        (f \otimes \iota_{h\cdot\mathbf{1}, (h+1)\cdot\mathbf{1}}(\theta_{\lambda,\mu}))U_{h+1}^*\big).
\end{align*}
That is, $j_{h\cdot\mathbf{1},(h+1)\cdot\mathbf{1}} \big(\rho_{h\cdot\mathbf{1}}\circ
\Ad(U_h)\big) = \big(\rho_{(h+1)\cdot \mathbf{1}} \circ \Ad(U_{h+1})\big)(\id_{C^*(A)}
\otimes \iota_{h\cdot\mathbf{1}, (h+1)\cdot\mathbf{1}})$. Thus
\begin{align*}
\csLA{c} &= \varinjlim(B_n, j_{m,n})
\textstyle\cong \varinjlim(B_{h \cdot \mathbf{1}}, j_{h\cdot \mathbf{1},(h+1)\cdot \mathbf{1}}) \\
    &\textstyle\cong \varinjlim\big(C^*(A) \otimes \big(\bigoplus_{b(v) = h\cdot\mathbf{1}} \Kk_{\Lambda v}\big),
                          \id_{C^*(A)} \otimes \iota_{h\cdot\mathbf{1}, (h+1)\cdot\mathbf{1}}\big).
\end{align*}
Since $C^*(\Lambda) \cong \varinjlim\big(\bigoplus_{b(v) = h\cdot \mathbf{1}}
\Kk_{\Lambda v}, \iota_{h\cdot \mathbf{1},(h+1)\cdot \mathbf{1}})$, the result follows.
\end{proof}

\begin{cor}\label{cor:[0,1]-bundle}
Let $\Lambda$ be a row-finite $k$-graph with no sources. Suppose that $d = \dcat0b$ for
some $b : \Lambda^0 \to \ZZ^k$. Fix $c \in \Zcat2(\Lambda, \RR)$, and identify
$\widehat{\RR}$ with $\RR$ via $t \mapsto (\omega_t : s \mapsto e^{\imath ts})$. Then
$C^*(\Lambda, \RR, c)|_{[0,1]} \cong C([0,1]) \otimes C^*(\Lambda)$, and the maps $\pi_t
: C^*(\Lambda, \RR, c) \to C^*(\Lambda, \omega_t \circ c)$ induce isomorphisms
$K_*(C^*(\Lambda, \RR, c)|_{[0,1]}) \cong K_*(C^*(\Lambda))$ such that
$(\pi_t)_*\big([i_\Lambda(v)]_0\big) = [s_v]_0$ for $v \in \Lambda^0$.
\end{cor}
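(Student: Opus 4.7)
The plan is to specialise Theorem~\ref{thm:dirlim tensor} to $A = \RR$ and push the resulting description through the restriction to $[0,1]$. Pontryagin duality gives $C^*(\RR) \cong C_0(\widehat{\RR}\,)$, and under the identification $t \mapsto \omega_t$ the subset $[0,1] \subseteq \RR$ is identified with a closed subset of $\widehat{\RR}$. The first task is to check that the isomorphism $\csLA{c} \cong C^*(\RR) \otimes C^*(\Lambda)$ of Theorem~\ref{thm:dirlim tensor} is in fact an isomorphism of $C_0(\widehat{\RR}\,)$-algebras. Inspecting its proof, the isomorphism is built from the maps $\rho_n$ of Lemma~\ref{lem:compact bundle}---which are $C^*(A)$-linear by construction---together with conjugations by the unitaries $U_h$. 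Since $A$ is abelian, $C^*(A)$ is commutative, so the multipliers $u_A(\kappa(\lambda))$ making up $U_h$ are central, and $\Ad(U_h)$ preserves $C^*(A)$-linearity. Hence the direct-limit isomorphism intertwines the $C^*(A)$-algebra structures.

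Granted $C_0(\widehat{\RR}\,)$-linearity, the restriction $\csLA{c}|_{[0,1]}$, defined as the quotient by the ideal generated by $\{i_A(f)b : b \in \csLA{c},\ f|_{[0,1]} = 0\}$, becomes $C([0,1]) \otimes C^*(\Lambda)$, which is the first claim. Under these identifications the fibre map $\pi_t$ of Proposition~\ref{prp:extension algebra bundle} corresponds to the composition $\csLA{c}|_{[0,1]} \cong C([0,1]) \otimes C^*(\Lambda) \xrightarrow{\mathrm{ev}_t} C^*(\Lambda)$, followed by the isomorphism $C^*(\Lambda) \cong C^*(\Lambda, \omega_t \circ c)$ supplied by \cite[Theorem~8.4]{KPS4}. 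Since $C([0,1]) \otimes C^*(\Lambda)$ deformation retracts onto $C^*(\Lambda)$ via the constant-function embedding and the straight-line homotopy $f \mapsto (r \mapsto f(t + s(r - t)))$, every $\mathrm{ev}_t$ is a $K$-equivalence, yielding the $K$-theory isomorphism.

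For the last assertion I would trace $i_\Lambda(v)$ through the identifications. Writing $i_\Lambda(v) = \sum_{\alpha \in v\Lambda^n} i_\Lambda(\alpha) i_\Lambda(\alpha)^*$ for $n$ chosen so that $b(v) + n = h\mathbf{1}$, each summand lies in $B_{h\mathbf{1}}$; under the inverse of $\rho_{h\mathbf{1}} \circ \Ad(U_h)$, the element $i_\Lambda(\alpha) i_A(f) i_\Lambda(\alpha)^*$ corresponds to $f \otimes \theta_{\alpha,\alpha}$ since the cocycle correction $i_A(\kappa(\alpha) - \kappa(\alpha))$ is trivial. Taking a strict limit in $f$ and summing over $\alpha$, $i_\Lambda(v)$ is carried to $1 \otimes s_v \in \Mm(C_0(\RR)) \otimes \Mm(C^*(\Lambda))$, using that $\sum_{\alpha \in v\Lambda^n} s_\alpha s_\alpha^* = s_v$. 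Restricting to $[0,1]$ gives the genuine element $1_{C([0,1])} \otimes s_v$, and $\mathrm{ev}_t$ sends this to $s_v \in C^*(\Lambda)$. Hence $(\pi_t)_*([i_\Lambda(v)]_0) = [s_v]_0$. The main obstacle is precisely the bookkeeping in the first step---verifying that Theorem~\ref{thm:dirlim tensor}'s isomorphism is $C_0(\widehat{A}\,)$-linear and tracking the image of a vertex projection through the $\Ad(U_h)$ twist; everything downstream is homotopy invariance of $K$-theory.
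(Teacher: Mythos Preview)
Your argument is correct and follows the same skeleton as the paper's: invoke Theorem~\ref{thm:dirlim tensor} for the tensor-product description, then read off the $K$-theory. You are more explicit than the paper on one point it glosses over---namely that the isomorphism of Theorem~\ref{thm:dirlim tensor} is $C_0(\widehat{\RR}\,)$-linear, which is what justifies restricting it to $[0,1]$; your observation that the $\rho_n$ are $C^*(A)$-linear and that $\Ad(U_h)$ commutes with the $C^*(A)$-action (since $C^*(A)$ is commutative) is exactly what is needed. For the $K$-theory step the paper instead appeals to the K\"unneth theorem together with $(K_0(C([0,1])),[1]) \cong (\ZZ,1)$, whereas you use homotopy invariance via the straight-line retraction of $C([0,1])$ onto constants. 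Both are standard and equally short; your route is slightly more elementary in that it avoids the K\"unneth machinery, while the paper's route packages the vertex-projection statement $(\pi_t)_*([i_\Lambda(v)]_0)=[s_v]_0$ into the identification $[1]\otimes[s_v]\mapsto[s_v]$ without the explicit chase you carry out.
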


\begin{proof}
The first statement is a special case of Theorem~\ref{thm:dirlim tensor}. The K\"unneth
theorem \cite[Theorem~23.1.3]{Blackadar:K-theory} and that $K_0(C([0,1]), [1]) \cong
(\ZZ, 1)$ gives the second statement.
\end{proof}

\section{\texorpdfstring{$K$}{K}-theory}\label{sec:K-theory}

In this section we prove our main result, Theorem~\ref{thm:main K-theory}: if $c \in
\Zcat2(\Lambda, \TT)$ arises by exponentiation of an $\RR$-valued $2$-cocycle, then
$K_*(C^*(\Lambda, c)) \cong K_*(C^*(\Lambda))$. Our argument is essentially that devised
by Elliott to prove \cite[Theorem~2.2]{Elliott:Neptun84}. We believe that Gwion Evans'
argument \cite{Evans:NYJM08} could also be adapted to obtain a spectral sequence
converging to $K_*(C^*(\Lambda, c))$. The advantage of our approach is that when $c$ is
of exponential form we reduce the problem of computing $K_*(C^*(\Lambda, c))$ to that of
computing $K_*(C^*(\Lambda))$ whether or not the latter is computable using Evans'
spectral sequence.

Given an action $\delta$ of a discrete group $G$ on a $C^*$-algebra $D$,  we write
$(\jmath_D, \jmath_G)$ for the universal covariant representation of $(D, G, \delta)$.
That is, $\jmath_D : D \to D \times_\delta G$ (we also write $\jmath_D : \Mm(D)\to \Mm (D
\times_\delta G)$ for the extension) and $\jmath_G : G \to \Uu\Mm(D \times_\delta G)$ are
the canonical embeddings. The following is surely well known to $K$-theory experts and is
implicitly contained in the proof of \cite[Theorem~2.2]{Elliott:Neptun84}.

\begin{thm}\label{thm:elliott}
Let $\phi : B \to C$ be a homomorphism of $C^*$-algebras which induces an isomorphism
$\phi_* : K_*(B) \to K_*(C)$. Suppose that $\beta : \ZZ^k \to \Aut(B)$ and $\gamma :
\ZZ^k \to \Aut(C)$ are actions such that $\gamma_n \circ \phi = \phi \circ \beta_n$ for
all $n \in \ZZ^k$. There is a homomorphism $\widetilde{\phi} : B \times_\beta \ZZ^k \to C
\times_\gamma \ZZ^k$ such that $\widetilde{\phi}(\jmath_B(b) \jmath_{\ZZ^k}(n)) =
\jmath_C(\phi(b))\jmath_{\ZZ^k}(n)$. The induced map $\widetilde{\phi}_* : K_*(B \times_\beta
\ZZ^k) \to K_*(C \times_\gamma \ZZ^k)$ is an isomorphism, and $\widetilde{\phi}_* \circ
(\jmath_B)_* = (\jmath_C)_* \circ \phi_*$.
\end{thm}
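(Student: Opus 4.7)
The plan is as follows. The existence of $\widetilde{\phi}$ is immediate from the universal property of the full crossed product: the equivariance condition $\gamma_n \circ \phi = \phi \circ \beta_n$ makes the pair $(\jmath_C \circ \phi, \jmath_{\ZZ^k})$ a covariant representation of $(B, \ZZ^k, \beta)$ in $\Mm(C \times_\gamma \ZZ^k)$, whose integrated form is $\widetilde{\phi}$. By construction $\widetilde{\phi} \circ \jmath_B = \jmath_C \circ \phi$, and applying $K_*$ to this identity gives $\widetilde{\phi}_* \circ (\jmath_B)_* = (\jmath_C)_* \circ \phi_*$ for free; this part requires no further argument.

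For the claim that $\widetilde{\phi}_*$ is an isomorphism, I would argue by induction on $k$. The case $k = 0$ is trivial because $B \times_\beta \ZZ^0 = B$ and $\widetilde{\phi} = \phi$. For the inductive step, split $\ZZ^k = \ZZ^{k-1} \oplus \ZZ e_k$ and use the standard iterated crossed product identification
\[
B \times_\beta \ZZ^k \cong (B \times_{\beta'} \ZZ^{k-1}) \times_{\overline{\beta}_{e_k}} \ZZ,
\]
where $\beta'$ denotes the restriction of $\beta$ to $\ZZ^{k-1}$ and $\overline{\beta}_{e_k}$ is the automorphism of $B \times_{\beta'} \ZZ^{k-1}$ induced by $\beta_{e_k}$ (and similarly on the $C$-side). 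Under these identifications $\widetilde{\phi}$ becomes the crossed product, in the sense of this theorem applied to $\ZZ$, of the intermediate homomorphism $\phi' : B \times_{\beta'} \ZZ^{k-1} \to C \times_{\gamma'} \ZZ^{k-1}$, which by the inductive hypothesis induces an isomorphism on $K_*$.

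It therefore suffices to treat $k = 1$: if $\phi' : D \to D'$ is a homomorphism intertwining automorphisms $\alpha, \alpha'$ and inducing an isomorphism on $K_*$, then the induced map $\widetilde{\phi'} : D \times_\alpha \ZZ \to D' \times_{\alpha'} \ZZ$ also induces an isomorphism on $K_*$. For this I would invoke the Pimsner--Voiculescu six-term exact sequence for each system: equivariance of $\phi'$ gives a morphism of these sequences, and in four of the six positions the vertical map is $\phi'_*$ while in the other two it is $\widetilde{\phi'}_*$. Since $\phi'_*$ is an isomorphism, the five lemma forces $\widetilde{\phi'}_*$ to be an isomorphism as well. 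Completing the induction gives the result.

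The main technical obstacle is the naturality of the Pimsner--Voiculescu sequence with respect to equivariant morphisms of $\ZZ$-$C^*$-systems, namely that an equivariant homomorphism induces a commuting ladder between the two six-term sequences with the vertical maps alternating between $\phi'_*$ and $\widetilde{\phi'}_*$. This is well known but warrants a citation; it can be extracted directly from Pimsner and Voiculescu's original construction via the Toeplitz extension, which is manifestly functorial in the input system, and the induced comparison of boundary maps then follows from the naturality of the six-term exact sequence associated to a morphism of short exact sequences.
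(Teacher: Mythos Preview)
Your proposal is correct and follows essentially the same strategy as the paper: induction on $k$ via the iterated crossed-product decomposition, with the Pimsner--Voiculescu sequence and the Five Lemma handling the single-$\ZZ$ step. The only cosmetic differences are that the paper peels off the first copy of $\ZZ$ rather than the last, and that you observe the identity $\widetilde{\phi}_* \circ (\jmath_B)_* = (\jmath_C)_* \circ \phi_*$ directly from $\widetilde{\phi} \circ \jmath_B = \jmath_C \circ \phi$ rather than carrying it through the induction as the paper does.
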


\begin{proof}
Universality of the crossed product applied to the covariant representation $(\phi \circ
\jmath_B, \jmath_{\ZZ^k})$ yields a homomorphism $\widetilde{\phi} : B \times_\beta \ZZ^k \to C
\times_\gamma \ZZ^k$ as described.

To see that $\widetilde{\phi}$ induces an isomorphism in $K$-theory satisfying
$\widetilde{\phi}_* \circ (\jmath_B)_* = (\jmath_C)_* \circ \phi_*$, we proceed by induction on
$k$.

The base case $k = 0$ is trivial. Fix $n \ge 0$, suppose as an inductive hypothesis that
the result holds for all $k \le n$, and consider $k = n+1$. Let $\beta^1 : \ZZ \to
\Aut(B)$ be the restriction of $\beta$ to $\{(m, 0, \dots, 0) : m \in \ZZ\} \subseteq
\ZZ^k$ and let $\beta^n$ be the $\ZZ^n$ action on $B \times_{\beta^1} \ZZ$ induced by the
restriction of $\beta$ to the last $n$ coordinates of $\ZZ^k$. Universality implies that
$B \times_\beta \ZZ^k$ is canonically isomorphic to $(B \times_{\beta^1} \ZZ)
\times_{\beta^n} \ZZ^n$. Define $\gamma^1 : \ZZ \to \Aut(C)$ and $\gamma^n : \ZZ^n \to
\Aut(C \times_{\gamma^1} \ZZ)$ similarly; by hypothesis, $\phi$ is equivariant for
$\beta^1$ and $\gamma^1$, and so it induces a homomorphism $\widetilde{\phi}^1 : B
\times_{\beta^1} \ZZ \to C \times_{\gamma^1} \ZZ$. Naturality of the Pimsner-Voiculescu
sequence induces a commuting diagram
\[
\begin{tikzpicture}[scale=0.8]
    %%inside
    \node (13) at (1,3) {$K_0(C)$};
    \node (53) at (5,3) {$K_0(C)$};
    \node (93) at (9,3) {$K_0(C \times_{\gamma^1} \ZZ)$};
    \node (91) at (9,1) {$K_1(C)$};
    \node (51) at (5,1) {$K_1(C)$};
    \node (11) at (1,1) {$K_1(C \times_{\gamma^1} \ZZ)$};
    \draw[-stealth] (13)--(53) node[pos=0.5, above] {\small$1 - \gamma^1_*$};
    \draw[-stealth] (53)--(93) node[pos=0.5, above] {\small$(\jmath_C)_*$};
    \draw[-stealth] (93)--(91);
    \draw[-stealth] (91)--(51) node[pos=0.5, above] {\small$1 - \gamma^1_*$};
    \draw[-stealth] (51)--(11) node[pos=0.5, above] {\small$(\jmath_C)_*$};
    \draw[-stealth] (11)--(13);
    %%
    %%outside
    %%
    \node (04) at (-1,5) {$K_0(B)$};
    \node (54) at (5,5) {$K_0(B)$};
    \node (A4) at (11,5) {$K_0(B \times_{\beta^1} \ZZ)$};
    \node (A0) at (11,-1) {$K_1(B)$};
    \node (50) at (5,-1) {$K_1(B)$};
    \node (00) at (-1,-1) {$K_1(B \times_{\beta^1} \ZZ)$};
    \draw[-stealth] (04)--(54) node[pos=0.5, above] {\small$1 - \beta^1_*$};
    \draw[-stealth] (54)--(A4) node[pos=0.5, above] {\small$(\jmath_B)_*$};
    \draw[-stealth] (A4)--(A0);
    \draw[-stealth] (A0)--(50) node[pos=0.5, above] {\small$1 - \beta^1_*$};
    \draw[-stealth] (50)--(00) node[pos=0.5, above] {\small$(\jmath_B)_*$};
    \draw[-stealth] (00)--(04);
    %%
    %%connectors
    %%
    \draw[-stealth] (04)--(13)  node[pos=0.6, anchor=south west, inner sep=0pt] {\small$\phi_*$};
    \draw[-stealth] (54)--(53)  node[pos=0.5, right] {\small$\phi_*$};
    \draw[-stealth] (A4)--(93)  node[pos=0.7, anchor=south east, inner sep=0pt] {\small$\widetilde{\phi}^1_*$};
    \draw[-stealth] (A0)--(91)  node[pos=0.6, anchor=north east, inner sep=0pt] {\small$\phi_*$};
    \draw[-stealth] (50)--(51)  node[pos=0.5, left] {\small$\phi_*$};
    \draw[-stealth] (00)--(11)  node[pos=0.7, anchor=north west, inner sep=0pt] {\small$\widetilde{\phi}^1_*$};
\end{tikzpicture}
\]
in which the $\phi_*$ are isomorphisms. The Five Lemma implies that the
$\widetilde{\phi}^1_*$ are isomorphisms. The relations $\widetilde{\phi}^1_* \circ
(\jmath_B)_* = (\jmath_C)_* \circ \phi_*$ appear as squares in the diagram.

The inductive hypothesis applied to the actions $\beta^n$ and $\gamma^n$ and the
equivariant homomorphism $\widetilde{\phi}^1$ yields a homomorphism $\widetilde{\phi}^n :
(B \times_{\beta^1} \ZZ) \times_{\beta^n} \ZZ^n \to (C \times_{\gamma^1} \ZZ)
\times_{\gamma^n} \ZZ^n$ which induces an isomorphism in $K$-theory and satisfies
$\widetilde{\phi}^n_* \circ (\jmath_{B \times_{\beta^1} \ZZ})_* = (\jmath_{C \times_{\gamma^1}
\ZZ})_* \circ \widetilde{\phi}^1_*$. Since $\phi$ intertwines the isomorphism $(B
\times_{\beta^1} \ZZ) \times_{\beta^n} \ZZ^n \cong B \times_\beta \ZZ^{n+1}$ and the
corresponding isomorphism for $C, \gamma$, the squares in the following diagram (in which
we have suppressed the subscripts on the inclusion maps $\jmath$) commute.
\[
\begin{tikzpicture}[yscale=0.8]
    \node (00) at (0,0) {$K_*(C)$};
    \node (40) at (3.5,0) {$K_*(C \times_{\gamma^1} \ZZ)$};
    \node (80) at (8,0) {$K_*((C \times_{\gamma^1} \ZZ) \times_{\gamma^n} \ZZ^n)$};
    \node (C0) at (12.5,0) {$K_*(C \times_{\gamma} \ZZ^k)$};
    \draw[-stealth] (00)--(40) node[pos=0.5, above] {\small$\jmath_*$};
    \draw[-stealth] (40)--(80) node[pos=0.5, above] {\small$\jmath_*$};
    \draw[-stealth] (80)--(C0) node[pos=0.5, above] {\small$\cong$};
    \node (02) at (0,2) {$K_*(B)$};
    \node (42) at (3.5,2) {$K_*(B \times_{\beta^1} \ZZ)$};
    \node (82) at (8,2) {$K_*((B \times_{\beta^1} \ZZ) \times_{\beta^n} \ZZ^n)$};
    \node (C2) at (12.5,2) {$K_*(B \times_{\beta} \ZZ^k)$};
    \draw[-stealth] (02)--(42) node[pos=0.5, above] {\small$\jmath_*$};
    \draw[-stealth] (42)--(82) node[pos=0.5, above] {\small$\jmath_*$};
    \draw[-stealth] (82)--(C2) node[pos=0.5, above] {\small$\cong$};
    \draw[-stealth] (02)--(00)  node[pos=0.5, right] {\small$\phi_*$};
    \draw[-stealth] (42)--(40)  node[pos=0.5, right] {\small$\widetilde{\phi}^1_*$};
    \draw[-stealth] (82)--(80)  node[pos=0.5, right] {\small$\widetilde{\phi}^n_*$};
    \draw[-stealth] (C2)--(C0)  node[pos=0.5, right] {\small$\widetilde{\phi}_*$};
\end{tikzpicture}
\]
Commutativity of the outside rectangle gives $\widetilde{\phi}_* \circ (\jmath_B)_* = (\jmath_C)_*
\circ \phi_*$.
\end{proof}

The Morita equivalence described in the following lemma is an application of Takai
duality, and could likely be deduced from arguments like those of
\cite[Section~5]{KP2000}. However, the details of the isomorphism~\eqref{eq:corner
isomorphism} implementing the Morita equivalence will be important in the proof of our
main theorem. Recall from \cite{KP2000} that if $\Lambda$ is a $k$-graph, then the
skew-product $\Lambda \times_d \ZZ^k$ of $\Lambda$ by its degree functor is the set
$\Lambda \times \ZZ^k$ with degree map $d(\lambda, n) = d(\lambda)$, range and source
maps $r(\lambda, n) = (r(\lambda), n)$ and $s(\lambda, n) = (s(\lambda), n + d(\lambda))$
and composition $(\lambda, n)(\mu, n + d(\lambda)) = (\lambda\mu, n)$. This skew-product
is a $k$-graph, and is row-finite and has no sources if and only if $\Lambda$ has the
same properties.

\begin{lem}\label{lem:not written yet}
Let $\Lambda$ be a row-finite $k$-graph with no sources, and let $c \in \Zcat2(\Lambda,
\TT)$. Define $\phi : \Lambda \times_d \ZZ^k \to \Lambda$ by $\phi(\lambda, n) =
\lambda$. Then $c\circ \phi \in \Zcat2(\Lambda \times_d \ZZ^k, \TT)$. There is an action
$\lt : \ZZ^k \to \Aut(C^*(\Lambda \times_d \ZZ^k, c \circ \phi))$ determined by
$\lt_n(s_{(\lambda, m)}) = s_{(\lambda, m+n)}$. The series
\[\textstyle
\sum_{v \in \Lambda^0} \jmath_{C^*(\Lambda \times_d \ZZ^k, c \circ \phi)}(s_{(v,0)})
\]
converges strictly to a full projection $P_0 \in \Mm \big(C^*(\Lambda \times_d \ZZ^k, c
\circ \phi) \times_{\lt} \ZZ^k\big)$, and there is an isomorphism
\begin{equation}\label{eq:corner isomorphism}
C^*(\Lambda, c) \cong
    P_0 \big(C^*(\Lambda \times_d \ZZ^k, c \circ \phi) \times_{\lt} \ZZ^k\big) P_0,
\end{equation}
which carries each $s_v$ to $\jmath_{C^*(\Lambda \times_d \ZZ^k, c \circ \phi)}(s_{(v,0)})$.
\end{lem}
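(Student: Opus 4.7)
The plan is to verify each clause in order and then construct an explicit isomorphism via the gauge-invariant uniqueness theorem. First, since composability in $\Lambda \times_d \ZZ^k$ forces $\phi$ to send composable pairs to composable pairs, and since $\phi$ is a functor, the cocycle identity for $c \circ \phi$ reduces to the cocycle identity for $c$, so $c \circ \phi \in \Zcat2(\Lambda \times_d \ZZ^k, \TT)$. For the action $\lt$, observe that $(c \circ \phi)((\lambda,m),(\mu,m+d(\lambda))) = c(\lambda,\mu)$ does not depend on $m$; hence for each fixed $n$, the family $\{s_{(\lambda, m+n)} : (\lambda, m) \in \Lambda \times_d \ZZ^k\}$ is a Cuntz-Krieger $(\Lambda \times_d \ZZ^k, c \circ \phi)$-family, and the universal property yields the automorphism $\lt_n$. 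The strict convergence of $P_0 := \sum_v \jmath(s_{(v,0)})$ follows because the $\jmath(s_{(v,0)})$ are mutually orthogonal projections. To see $P_0$ is full, note that $\jmath_{\ZZ^k}(n) \jmath(s_{(v,0)}) \jmath_{\ZZ^k}(-n) = \jmath(\lt_n(s_{(v,0)})) = \jmath(s_{(v,n)})$, so the ideal generated by $P_0$ contains every $\jmath(s_{(v,n)})$; these form an approximate identity for $\jmath(C^*(\Lambda \times_d \ZZ^k, c\circ\phi))$, which in turn generates the crossed product as an ideal.

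For the corner isomorphism, define
\[
t_\lambda := \jmath(s_{(\lambda, 0)}) \jmath_{\ZZ^k}(d(\lambda)) \in P_0 \big(C^*(\Lambda \times_d \ZZ^k, c\circ\phi) \times_{\lt} \ZZ^k\big) P_0.
\]
A short computation using $\jmath_{\ZZ^k}(n) \jmath(a) = \jmath(\lt_n(a)) \jmath_{\ZZ^k}(n)$ and the twisted multiplication in the skew-product algebra gives
$t_\lambda t_\mu = c(\lambda, \mu) t_{\lambda\mu}$ when $s(\lambda) = r(\mu)$, $t_\lambda^* t_\lambda = t_{s(\lambda)}$, and $\sum_{\lambda \in v\Lambda^n} t_\lambda t_\lambda^* = t_v$. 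Thus $(t_\lambda)_{\lambda \in \Lambda}$ is a Cuntz-Krieger $(\Lambda, c)$-family, and universality of $C^*(\Lambda, c)$ yields a homomorphism $\pi : C^*(\Lambda, c) \to P_0(\cdots)P_0$ with $\pi(s_v) = \jmath(s_{(v,0)})$.

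For surjectivity, the spanning elements of the crossed product are of the form $\jmath(s_{(\lambda, m)} s^*_{(\mu, m')}) \jmath_{\ZZ^k}(n)$, and the calculation showing that $P_0 \cdot \jmath(s_{(\lambda,m)}) = 0$ unless $m = 0$ and $r(\lambda) \in \Lambda^0$, together with the analogous computation on the right, forces $m = 0$, $m' = n$, and (by composability in the skew product) $n = d(\lambda) - d(\mu)$. Rewriting these spanning elements shows that they are precisely of the form $t_\lambda t_\mu^*$ with $s(\lambda) = s(\mu)$, so $\pi$ is surjective. For injectivity, the gauge action $\gamma$ on $C^*(\Lambda \times_d \ZZ^k, c \circ \phi)$ satisfies $\gamma_z(s_{(\lambda,m)}) = z^{d(\lambda)} s_{(\lambda,m)}$ and commutes with $\lt$, hence descends to an action $\widetilde{\gamma}$ of $\TT^k$ on the crossed product that fixes $P_0$ and satisfies $\widetilde{\gamma}_z(t_\lambda) = z^{d(\lambda)} t_\lambda$. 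Since each $\pi(s_v) = \jmath(s_{(v,0)})$ is nonzero, the gauge-invariant uniqueness theorem \cite[Corollary 7.7]{KPS4} gives injectivity.

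The main obstacle will be the bookkeeping in step three: carefully identifying which spanning elements of the crossed product survive the compression by $P_0$ and recognising them as $t_\lambda t_\mu^*$, in particular tracking how the implementing shift $\jmath_{\ZZ^k}(d(\lambda) - d(\mu))$ converts a skew-product partial isometry with shifted second coordinate back into the canonical form. Everything else is a direct verification of the Cuntz-Krieger relations twisted by $c$ and a routine application of the standard uniqueness theorem.
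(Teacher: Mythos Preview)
Your proposal is correct and follows essentially the same route as the paper: you define the same elements $t_\lambda = \jmath(s_{(\lambda,0)})\jmath_{\ZZ^k}(d(\lambda))$, verify they form a Cuntz-Krieger $(\Lambda,c)$-family in the corner, establish surjectivity by compressing the spanning elements $s_{(\lambda,m)}s^*_{(\mu,m')}u_p$ and identifying the survivors as $t_\lambda t_\mu^*$, and obtain injectivity from the gauge-invariant uniqueness theorem via the $\TT^k$-action on the crossed product induced by the gauge action (which commutes with $\lt$). The only cosmetic difference is that you spell out why $c\circ\phi$ is a cocycle and why the gauge action extends to the crossed product, whereas the paper leaves these implicit.
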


\begin{proof}
For $n \in \NN^k$, $(\lambda, m) \mapsto s_{(\lambda, m+n)}$ determines a Cuntz-Krieger
$(\Lambda \times_d \ZZ^k, c \circ \phi)$-family in $C^* (\Lambda \times_d \ZZ^k, c \circ
\phi)$. Thus the universal property gives a homomorphism $\lt_n : C^*(\Lambda \times_d
\ZZ^k, c \circ \phi) \to C^*(\Lambda \times_d \ZZ^k, c \circ \phi)$ satisfying
$\lt_n(s_{(\lambda, m)}) = s_{(\lambda, m+n)}$. It is straightforward to check that this
determines an action  $\lt$ of $\ZZ^k$ on $C^*(\Lambda \times_d \ZZ^k, c \circ \phi)$.

To simplify calculations, we identify the generators of $C^*(\Lambda \times_{d} \ZZ^k, c
\circ \phi)$ with their images in $C^*(\Lambda \times_d \ZZ^k, c \circ \phi) \times_{\lt}
\ZZ^k$. For $m \in \ZZ^k$ we write $u_m$ for $\jmath_{\ZZ^k}(m) \in \Uu\Mm\big(C^*(\Lambda
\times_d \ZZ^k, c \circ \phi) \times_{\lt} \ZZ^k\big)$.

It is standard that the sum defining $P_0$ converges strictly to a multiplier projection
(see, for example, \cite[Lemma~2.10]{CBMSbook}). The series $\sum_{(v,n) \in \Lambda^0
\times \ZZ^k} s_{(v,n)}$ is an approximate identity for $C^*(\Lambda \times_d \ZZ^k, c
\circ \phi) \times_{\lt} \ZZ^k$. Hence $P_0$ is full because
\[
s_{(v,n)}
    = \lt_n(s_{(v,0)})
    = u_n s_{(v,0)} u^*_n
    = u_n P_0 s_{(v,0)} u^*_n\quad\text{ for all $(v, n) \in \Lambda^0 \times \ZZ^k$.}
\]

For $\lambda \in \Lambda$, define $t_\lambda := s_{(\lambda, 0)} u_{d(\lambda)}$. Then
$t_\lambda t^*_\lambda = s_{(\lambda, 0)} s^*_{(\lambda, 0)} \le P_0$, and
\[
t^*_\lambda t_\lambda = u_{d(\lambda)}^* s_{(s(\lambda), d(\lambda))} u_{d(\lambda)} = s_{(s(\lambda), 0)} \le P_0,
\]
so the $t_\lambda$ belong to the corner determined by $P_0$.

Straightforward calculations using the relation $u_n s_{(\lambda,m)} u^*_n = s_{(\lambda,
m+n)}$ show that the $t_\lambda$ form a Cuntz-Krieger $(\Lambda, c)$-family in $P_0
(C^*(\Lambda \times_d \ZZ^k, c \circ \phi) \times_{\lt} \ZZ^k) P_0$. Hence the universal
property of $C^*(\Lambda, c)$ gives a homomorphism $\pi_t : C^*(\Lambda, c) \to P_0
\big(C^*(\Lambda \times_d \ZZ^k, c \circ \phi) \times_{\lt} \ZZ^k\big) P_0$ which carries
each $s_v$ to $t_v = \jmath_{C^*(\Lambda \times_d \ZZ^k, c \circ \phi)}(s_{(v,0)})$.

Since the generators of $C^*(\Lambda \times_d \ZZ^k, c \circ \phi)$ are nonzero, the
$t_v$ are nonzero. The gauge action on $C^*(\Lambda \times_d \ZZ^k, c \circ \phi)$
induces an action $\beta$ of $\TT^k$ on $C^*(\Lambda \times_d \ZZ^k, c \circ \phi)
\times_{\lt} \ZZ^k$ satisfying $\beta_z(t_\lambda) = z^{d(\lambda)} t_\lambda$. The
gauge-invariant uniqueness theorem \cite[Corollary~7.7]{KPS4} therefore implies that
$\pi_t$ is injective. To check that $\pi_t$ is surjective, observe that $C^*(\Lambda
\times_d \ZZ^k, c \circ \phi) \times_{\lt} \ZZ^k$ is densely spanned by elements of the
form $s_{(\lambda, m)} s^*_{(\mu,n)} u_p$, where $s(\lambda,m) = s(\mu,n)$ and hence $n-m
= d(\lambda) - d (\mu)$. For a nonzero spanning element $s_{(\lambda, m)} s^*_{(\mu,n)}
u_p$,
\begin{align*}
P_0 s_{(\lambda, m)} s^*_{(\mu,n)} u_p P_0
    &= P_0 s_{(\lambda, m)} u_p s^*_{(\mu,n - p)} P_0
    = \begin{cases}
        s_{(\lambda, 0)} u_{n-m} s^*_{(\mu, 0)} &\text{ if $m = n-p = 0$}\\
        0 &\text{ otherwise.}
    \end{cases}
\end{align*}
Since $n - m = d(\lambda) - d(\mu)$ we have $s_{(\lambda, 0)} u_{n-m} s^*_{(\mu, 0)} =
t_\lambda t^*_\mu$. Thus $\pi_t$ is surjective.
\end{proof}

To prove our main theorem, we need a standard technical lemma, which we include for
completeness and to establish notation.

\begin{lem}\label{lem:action field}
Let $X$ be a compact Hausdorff space, and suppose that $B$ is a $C(X)$-algebra with
respect to a homomorphism $p : C(X) \to \Zz \Mm(B)$. Let $\alpha$ be an action of a
discrete group $G$ on $B$ whose extension to $\Mm(B)$ fixes $p(C(X))$. Then
$\jmath_B\circ p$ maps $C(X)$ into $\Zz \Mm(B \times_\alpha G)$ and $B \times_\alpha G$
is also a $C(X)$-algebra. For each $x \in X$, $\alpha$ induces an action $\alpha^x$ of
$G$ on $B_x$ and the quotient map $\pi^x : B \to B_x$ is equivariant for $\alpha$ and
$\alpha^x$. The induced homomorphism $\widetilde{\pi}^x : B \times_\alpha G \to B_x
\times_{\alpha^x} G$ satisfies
\begin{equation}\label{eq:intertwining}
\widetilde{\pi}^x \circ \jmath_B = \jmath_{B_x} \circ \pi^x
    \quad\text{ and }\quad
\widetilde{\pi}^x \circ \jmath_G = \jmath^x_G,
\end{equation}
where $(\jmath_{B_x}, \jmath^x_G)$ is the universal covariant representation of
$(B_x, G, \alpha^x)$. Moreover, $\widetilde{\pi}^x$ descends to an isomorphism
$(B \times_\alpha G)_x \cong B_x \times_{\alpha^x} G$.
\end{lem}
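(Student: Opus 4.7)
The plan is to verify the assertions in order, with the final fibre isomorphism being the main point. For the first claim, I would check that $\jmath_B \circ p$ lands in the centre of $\Mm(B \times_\alpha G)$: since $\alpha$ fixes $p(C(X))$, the covariance relation gives $\jmath_G(g) \jmath_B(p(f)) \jmath_G(g)^* = \jmath_B(\alpha_g(p(f))) = \jmath_B(p(f))$, and $\jmath_B(p(f))$ commutes with $\jmath_B(B)$ because $p(f) \in \Zz\Mm(B)$; as $\jmath_B(B) \cup \jmath_G(G)$ generates $B \times_\alpha G$, centrality follows. The nondegeneracy condition reduces, on spanning elements $\jmath_B(b)\jmath_G(g)$, to $\clsp\{p(f)b : f \in C(X), b \in B\} = B$, which holds by hypothesis.

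For the induced action, since $\alpha_g(p(f)) = p(f)$ we have $\alpha_g(p(f)b) = p(f)\alpha_g(b) \in I_x$ whenever $f(x) = 0$, so $I_x$ is $\alpha$-invariant; thus $\alpha$ descends to $\alpha^x$ on $B_x$ with $\pi^x$ equivariant. Then $(\jmath_{B_x} \circ \pi^x, \jmath^x_G)$ is a covariant representation of $(B, G, \alpha)$ in $\Mm(B_x \times_{\alpha^x} G)$, and the universal property of the crossed product produces the homomorphism $\widetilde{\pi}^x$ satisfying~\eqref{eq:intertwining}.

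For the fibre isomorphism, let $J_x \subseteq B \times_\alpha G$ denote the ideal defining $(B \times_\alpha G)_x$, generated by $\{\jmath_B(p(f))a : f(x) = 0, a \in B \times_\alpha G\}$. Since $\pi^x(p(f)) \cdot \pi^x(b) = \pi^x(p(f)b) = 0$ in $B_x$ for all $b \in B$ when $f(x) = 0$, the multiplier $\pi^x(p(f))$ of $B_x$ vanishes, so $\widetilde{\pi}^x(\jmath_B(p(f))) = \jmath_{B_x}(\pi^x(p(f))) = 0$ and $\widetilde{\pi}^x$ descends to a surjection $(B \times_\alpha G)_x \to B_x \times_{\alpha^x} G$. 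To construct the inverse, let $q_x : B \times_\alpha G \to (B \times_\alpha G)_x$ be the quotient map. Since $q_x \circ \jmath_B$ kills $I_x$, it factors as $\rho \circ \pi^x$ for a homomorphism $\rho : B_x \to (B \times_\alpha G)_x$; equivariance of $\pi^x$ then makes $(\rho, q_x \circ \jmath_G)$ a covariant representation of $(B_x, G, \alpha^x)$ whose integrated form inverts the descended $\widetilde{\pi}^x$ on generators. The main care point throughout is bookkeeping in the multiplier algebras when $B$ (and hence $B_x$) is nonunital, but every step is dictated by the relevant universal property.
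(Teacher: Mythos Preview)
Your proposal is correct and follows essentially the same approach as the paper: construct $\widetilde{\pi}^x$ from the covariant pair $(\jmath_{B_x}\circ\pi^x,\jmath^x_G)$, check it kills the fibre ideal so descends, and build the inverse by factoring the quotient map $q_x\circ\jmath_B$ through $B_x$ to obtain a covariant pair in $(B\times_\alpha G)_x$. Your write-up is in fact slightly more explicit than the paper's (you spell out the nondegeneracy of the $C(X)$-action and the $\alpha$-invariance of $I_x$), but the argument is the same.
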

\begin{proof}
The range of $\jmath_B \circ p$ is central because $\alpha$ fixes $p(C(X))$. Fix $x \in
X$. The quotient map $\pi^x$ is equivariant for $\alpha$ and $\alpha^x$ by definition of
the latter. Hence $(\jmath_{B_x} \circ \pi^x, \jmath^x_G)$ is a covariant representation
of $(B, \alpha, G)$, and so induces a homomorphism $\widetilde{\pi}^x : B \times_\alpha G
\to B_x \times_{\alpha^x} G$ satisfying~\eqref{eq:intertwining}. This $\widetilde{\pi}^x$
descends to a homomorphism $(B \times_\alpha G)_x \to B_x \times_{\alpha^x} G$ because
its kernel contains $\{f \in C(X) : f(x) = 0\}$. Let $\theta^x : B\times_\alpha G \to
(B\times_\alpha G)_x$ be the quotient map, and continue to write $\theta^x$ for the
extension to multiplier algebras. Then $\theta^x \circ \jmath_B$ descends to a
homomorphism $\widetilde{\theta}^x : B_x \to (B\times_\alpha G)_x$, and the pair
$(\widetilde{\theta}^x, \theta^x \circ \jmath_G)$ is a covariant representation of $(B_x,
\alpha^x, G)$ in $(B\times_\alpha G)_x$ whose integrated form is inverse to
$\widetilde{\pi}^x$.
\end{proof}

\begin{thm}\label{thm:main K-theory}
Let $\Lambda$ be a row-finite $k$-graph with no sources. Let $c \in \Zcat2(\Lambda,
\RR)$. For $t \in \RR$ define $\omega_t \in \widehat{\RR}$ by $\omega_t(s) = e^{\imath ts}$;
then $C^*(\Lambda, \omega_t \circ c)$ is unital if and only if $\Lambda^0$ is finite.
There is an isomorphism $K_*(C^*(\Lambda, \omega_t \circ c)) \cong
K_*(C^*(\Lambda))$ which carries $[s_v]_0$ to $[s_v]_0$ for each $v \in \Lambda^0$ and
carries $[1_{C^*(\Lambda, \omega_t \circ c)}]_0$ to $[1_{C^*(\Lambda)}]_0$ if $\Lambda^0$
is finite.
\end{thm}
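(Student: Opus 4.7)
The unital assertion is standard: $\sum_{v \in \Lambda^0} s_v$ is an approximate identity for $C^*(\Lambda, \omega_t \circ c)$ built from mutually orthogonal projections, so it converges in norm to a unit precisely when $\Lambda^0$ is finite. For the $K$-theory claim I would assemble Lemma~\ref{lem:not written yet}, Corollary~\ref{cor:[0,1]-bundle}, Lemma~\ref{lem:action field}, and Theorem~\ref{thm:elliott} along the path sketched in the introduction.

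First I would set up a bundle algebra on the skew product. Write $\phi : \Lambda \times_d \ZZ^k \to \Lambda$ for the projection onto the first coordinate, and put $D := C^*(\Lambda \times_d \ZZ^k, \RR, c \circ \phi)$ with fibres $E_t := C^*(\Lambda \times_d \ZZ^k, \omega_t \circ c \circ \phi)$. Since $c \circ \phi$ is insensitive to the $\ZZ^k$-coordinate of a path, the universal property of $D$ should yield a $\ZZ^k$-action $\lt$ that fixes $i_\RR(C^*(\RR))$ and satisfies $\lt_n(i_\Lambda(\lambda, m)) = i_\Lambda(\lambda, m+n)$; this is $C_0(\widehat{\RR})$-linear, and its fibre action coincides with the action $\lt^t$ supplied on each $E_t$ by Lemma~\ref{lem:not written yet}. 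Because the degree functor on $\Lambda \times_d \ZZ^k$ is the coboundary of $b(v,n) := n$, Corollary~\ref{cor:[0,1]-bundle} should identify $D|_{[0,1]}$ with $C([0,1]) \otimes C^*(\Lambda \times_d \ZZ^k)$ and show that each fibre evaluation $\pi^D_t : D|_{[0,1]} \to E_t$ is a $K$-equivalence.

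Next I would apply Lemma~\ref{lem:action field} with $X = [0,1]$ to make $D|_{[0,1]} \times_{\lt} \ZZ^k$ into a $C([0,1])$-algebra with fibre $E_t \times_{\lt^t} \ZZ^k$ at $t$, whose fibre map is precisely the induced map $\widetilde{\pi^D_t}$ arising from the equivariance of $\pi^D_t$. Theorem~\ref{thm:elliott} then upgrades each $\widetilde{\pi^D_t}$ to a $K$-equivalence. Lemma~\ref{lem:not written yet} provides full projections $P^t_0$ with $P^t_0 (E_t \times_{\lt^t} \ZZ^k) P^t_0 \cong C^*(\Lambda, \omega_t \circ c)$, and a translation argument parallel to that lemma should show that $P_0 := \sum_v \jmath(i_\Lambda(v,0))$ is a full multiplier projection in $D|_{[0,1]} \times_{\lt} \ZZ^k$ whose image under $\widetilde{\pi^D_t}$ is $P^t_0$. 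Cornering by full projections preserves $K$-theory, so composing the resulting isomorphisms at $t = 0$ and general $t$ yields a chain $K_*(C^*(\Lambda)) \cong K_*(P_0 (D|_{[0,1]} \times_{\lt} \ZZ^k) P_0) \cong K_*(C^*(\Lambda, \omega_t \circ c))$ sending each $[s_v]_0$ to $[\jmath(i_\Lambda(v,0))]_0$ and back to $[s_v]_0$; when $\Lambda^0$ is finite, the same chain sends $[1]_0 = \sum_v [s_v]_0$ to itself.

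The main technical obstacle is bookkeeping rather than substance: verifying that $\lt$ is well defined and $C_0(\widehat{\RR})$-linear, that $P_0$ is full in the bundle crossed product (not merely in each fibre), and, most importantly, that the fibre maps, the $\ZZ^k$-action, and cornering by $P_0$ all commute in the precise sense needed for the resulting $K$-theory diagram to close up. All the substantive analysis is already encoded in the preceding sections.
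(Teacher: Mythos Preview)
Your proposal is correct and follows the same architecture as the paper: pass to the skew product $\Lambda \times_d \ZZ^k$ so that the degree becomes a coboundary, apply Corollary~\ref{cor:[0,1]-bundle} to see that the evaluation maps on $D|_{[0,1]}$ are $K$-equivalences, use Lemma~\ref{lem:action field} and Theorem~\ref{thm:elliott} to propagate this through the $\ZZ^k$-crossed product, and invoke Lemma~\ref{lem:not written yet} to identify the fibres of the crossed product with the twisted $k$-graph algebras via full corners. The one organisational difference is that you lift the full projection $P_0$ to the bundle crossed product $D|_{[0,1]} \times_{\lt} \ZZ^k$ and corner there, whereas the paper simply applies Lemma~\ref{lem:not written yet} separately at the two fibres $t=0$ and $t=1$; your route requires the extra (true but unneeded) check that $P_0$ is full globally, while the paper's route avoids this at the cost of tracking two fibrewise corner isomorphisms through the diagram.
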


\begin{proof}
Let $t \in \RR$. If $\Lambda^0$ is finite then $C^*(\Lambda, \omega_t \circ c)$ has unit $\sum_{v \in \Lambda^0} s_v$.
If $C^*(\Lambda, \omega_t \circ c)$ is unital then arguing as in \cite[Proposition 1.3]{KPR98} it follows
that $\Lambda^0$ is finite.

By rescaling $c$ if necessary, it suffices to prove the second assertion for $t = 1$. We
have $C^*(\Lambda) = C^*(\Lambda, \omega_0 \circ c)$. Corollary~\ref{cor:[0,1]-bundle}
implies that $C^*(\Lambda, \RR, c)|_{[0,1]}$ is a $C([0,1])$-algebra with fibres
$C^*(\Lambda, \RR, c)_t \cong C^*(\Lambda, \omega_t \circ c)$.

Let $\phi : \Lambda \times_d \ZZ^k \to \Lambda$ be the functor $\phi(\lambda, n) =
\lambda$. Then $c \circ \phi \in \Zcat2(\Lambda \times_d \ZZ^k, \RR)$. Lemma~\ref{lem:not
written yet} applied to $\omega_1 \circ c \in \Zcat2(\Lambda, \TT)$ yields an isomorphism
$K_*(C^*(\Lambda, \omega_1 \circ c)) \cong K_*\big(C^*(\Lambda \times_d \ZZ^k, \omega_1
\circ c \circ \phi) \times_{\lt} \ZZ^k\big)$ which carries each $[s_v]_0$ to
$[\jmath_{C^*(\Lambda \times_d \ZZ^k, \omega_t \circ c \circ \phi)}(s_{(v,0)})]_0$.

The map $b(v,m) = m$ from $(\Lambda \times_d \ZZ^k)^0$ to $\ZZ^k$ satisfies $d = \dcat0
b$. Thus Corollary~\ref{cor:[0,1]-bundle} shows that the evaluation maps $C^*(\Lambda,
\RR, c)|_{[0,1]} \to C^*(\Lambda, \RR, c)_t \cong C^*(\Lambda \times_d \ZZ^k, \omega_t
\circ c \circ \phi)$ induce isomorphisms in $K$-theory which carry each
$[\jmath_\Lambda(v)]_0$ to $[s_v]_0$. The universal property of $C^*(\Lambda, \RR, c)$
implies that it carries an action $\lt$ of $\ZZ^k$ characterised by $\lt \circ
\jmath_{\RR} = \jmath_{\RR}$ and $\lt(\jmath_{\Lambda \times_d \ZZ^k}((\lambda, m))) =
\jmath_{\Lambda \times_d \ZZ^k}((\lambda, m + d(\lambda)))$. In particular, $\lt$ fixes
the central copy of $C_0(\RR)$. Lemma~\ref{lem:action field} now implies that the
evaluation maps $C^*(\Lambda, \RR, c) \to C^*(\Lambda, \RR, c)_t$ satisfy the hypotheses
of Theorem~\ref{thm:elliott}. Hence Theorem~\ref{thm:elliott} applied to each of $\pi_1$
and $\pi_0$ gives isomorphisms
\[
K_*(C^*(\Lambda, \omega_1 \circ c))
    \cong K_*(C^*(\Lambda, \RR, c)|_{[0,1]})
    \cong K_*(C^*(\Lambda, \omega_0 \circ c))
    \cong K_*(C^*(\Lambda))
\]
whose composition carries each $[s_v]_0$ to $[s_v]_0$. The final assertion follows from
the first paragraph.
\end{proof}

\begin{example}[$K$-theory of noncommutative tori]
Resume the notation of Example~\ref{eg:nct}, so that $A$ is the free abelian group
generated by  $\{(i,j) : 1 \le j < i \le k\}$ and $c \in \Zcat2(T_k, A)$ is given by
$c(m,n) = \sum_{j < i} m_in_j \cdot (i, j)$. Fix $z : \{(i,j) : j < i \le k\} \to \TT$,
and choose elements $r_{i,j} \in \RR$ such that $e^{\imath r_{i,j}} = z_{i,j}$. There is
a homomorphism $r : A \to \RR$ determined by $(i,j) \mapsto r_{i,j}$, and $b := r \circ
c$ is then an $\RR$-valued $2$-cocycle on $T_k$. For $j < i$, we have $z_{i,j} = e^{\imath
r_{i,j}}$. The resulting character $\chi_z \circ c$ of $A$ was used in
Example~\ref{eg:nct} to prove that the noncommutative torus for $z$ coincides with
$C^*(T_k, \chi_z \circ c)$. We now have $\chi_z \circ c = \omega_1 \circ b$ where
$\omega_1$ is the character $\omega_1(s) = e^{\imath s}$ of $\RR$. Hence
Theorem~\ref{thm:main K-theory} implies that $K_*(C^*(T_k, \chi_z \circ c)) \cong
K_*(C^*(T_k)) = K_*(C(\TT^k))$. Thus we recover Elliott's calculation of the $K$-groups
of the noncommutative tori, which inspired and informed our results here.
\end{example}

\begin{example}[$K$-theory of Heegaard-type quantum spheres]
Let $\Lambda$ be the $2$-graph of Example~\ref{eg:Heegaard bundle}, and resume the
notation used in that example. Let $\Tt$ denote the Toeplitz algebra with generating
isometry $S$. Recall from \cite[Remark~3.3]{HajacMatthesEtAl:ART06} that $C^*(\Lambda)
\cong (\Tt \otimes \Tt)/(\Kk \otimes \Kk)$, giving the following six-term exact sequence.
\[
\begin{tikzpicture}[yscale=0.8]
    \node (00) at (0,0) {$K_1(C^*(\Lambda))$};
    \node (40) at (4,0) {$K_1(\Tt \otimes \Tt)$};
    \node (80) at (8,0) {$K_1(\Kk \otimes \Kk)$};
    \node (82) at (8,2) {$K_0(C^*(\Lambda))$};
    \node (42) at (4,2) {$K_0(\Tt \otimes \Tt)$};
    \node (02) at (0,2) {$K_0(\Kk \otimes \Kk)$};
    \draw[-stealth] (02)--(42);
    \draw[-stealth] (42)--(82);
    \draw[-stealth] (82)--(80);
    \draw[-stealth] (80)--(40);
    \draw[-stealth] (40)--(00);
    \draw[-stealth] (00)--(02);
\end{tikzpicture}
\]
Recall that $(K_0(\Tt), K_1(\Tt), [1]_0) \cong (\ZZ, \{0\}, 1)$ (see
\cite[9.4.2]{Blackadar:K-theory}). The inclusion $\iota : \Kk \to \Tt$ induces the  zero
map on $K_0$ (because $\iota$ carries a minimal projection in $\Kk$ to $S^*S - SS^*$).
Naturality of the K\"unneth formula  (see \cite[23.1.3]{Blackadar:K-theory}) therefore
implies that the map $\ZZ \cong K_0(\Kk \otimes \Kk) \to K_0(\Tt \otimes\Tt) \cong \ZZ$
is the zero map. The K\"unneth formula also implies that $K_1(\Tt \otimes\Tt) = \{0\} =
K_1(\Kk \otimes \Kk)$. So the six-term sequence above becomes
\[
\begin{tikzpicture}
    \node (1) at (-4.5,0) {$0$};
    \node (2) at (-2.5,0) {$K_1(C^*(\Lambda))$};
    \node (3) at (0,0) {$\ZZ$};
    \node (4) at (2,0) {$\ZZ$};
    \node (5) at (4.5,0) {$K_0(C^*(\Lambda))$};
    \node (6) at (6.5,0) {$0$.};
    \draw[-stealth] (1)--(2);
    \draw[-stealth] (2)--(3);
    \draw[-stealth] (3)--(4) node[pos=0.5, above] {\small$0$};
    \draw[-stealth] (4)--(5);
    \draw[-stealth] (5)--(6);
\end{tikzpicture}
\]
Hence $K_j(C^*(\Lambda)) \cong \ZZ$ for $j = 0,1$. Theorem~\ref{thm:main K-theory} then
implies that $K_j(C(S^3_{00\theta})) = \ZZ$ for $j = 0,1$ and $\theta \in [0, 2\pi)$.
Thus our methods recover \cite[Theorem~4.1]{BHMS}.
\end{example}

\subsection*{Kirchberg algebras}\label{sec:Kirchberg}

Recall that a Kirchberg algebra is a purely infinite, simple, nuclear $C^*$-algebra (for
more details see \cite{RordamBook2002}). We invoke the Kirchberg-Phillips theorem, which
classifies Kirchberg algebras, to see that in many cases the isomorphism class of
$C^*(\Lambda, c)$ is independent of $c$. Recall from \cite{EvansSims:JFA12} that a
\emph{generalised cycle with an entrance} in a $k$-graph $\Lambda$ is a pair $(\mu,\nu)
\in \Lambda \times \Lambda$ such that:
\begin{enumerate}
\item $\mu \not= \nu$, $s(\mu) = s(\nu)$, $r(\mu) = r(\nu)$, and $\MCE(\mu\tau, \nu)
    \not= \emptyset$ for every $\tau \in s(\mu)\Lambda$; and
\item there exists $\sigma \in s(\nu)\Lambda$ such that $\MCE(\mu,\nu\sigma) =
    \emptyset$.
\end{enumerate}
The argument of \cite[Lemma~3.7]{EvansSims:JFA12} shows that if $(\mu,\nu)$ is a
generalised cycle with an entrance then $s_{r(\nu)} \ge s_\nu s^*_\nu > s_\mu s^*_\mu
\sim s_\nu s^*_\nu$ and so $s_{r(\nu)}$ is infinite. A vertex $v$ in $\Lambda$ can be
\emph{reached from a generalised cycle with an entrance} if there is a generalised cycle
$(\mu,\nu)$ with an entrance such that $v \Lambda r(\mu) \not= \emptyset$.

For $v,w \in \Lambda^0$ we denote by $v \Lambda w$ the set $\{ \lambda \in \Lambda :
s(\lambda) = v , r(\lambda) = w \}$. Recall from \cite[Proposition~3.6 and
Remark~A.3]{LewinSims:MPCPS10} that a $k$-graph $\Lambda$ is \emph{aperiodic} if,
whenever $\mu \not= \nu$, there exists $\tau \in s(\mu)\Lambda$ such that $\MCE(\mu\tau,
\nu\tau) = \emptyset$, and is \emph{cofinal} if, for all $v,w \in \Lambda^0$, there
exists $m \in \NN^k$ such that $v\Lambda s(\mu) \not= \emptyset$ for all $\mu \in
w\Lambda^m$.

\begin{prop}\label{prp:purely infinite}
Let $\Lambda$ be a row-finite $k$-graph with no sources. Suppose that every vertex in
$\Lambda$ can be reached from a generalised cycle with an entrance and that $\Lambda$ is
aperiodic. For every $c \in \Zcat2(\Lambda, \TT)$, every hereditary subalgebra of
$C^*(\Lambda, c)$ contains an infinite projection. In particular, if $\Lambda$ is
cofinal, then each $C^*(\Lambda, c)$ is simple and purely infinite.
\end{prop}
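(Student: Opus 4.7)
The plan is to combine the standard graph-algebra strategy for locating infinite projections with the gauge-invariant machinery already available for twisted $k$-graph $C^*$-algebras. First, I would establish that every nonzero hereditary subalgebra $B$ of $C^*(\Lambda, c)$ contains a subprojection Murray--von Neumann equivalent to some vertex projection $s_v$. Given a nonzero positive element $a \in B$, approximate $a$ by a finite linear combination of spanning elements $s_\mu s_\nu^*$ and apply the faithful conditional expectation $\Phi$ onto the fixed-point algebra for the gauge action to single out a ``diagonal'' piece of norm close to $\|a\|$. Then, using the Lewin--Sims form of aperiodicity \cite[Proposition~3.6]{LewinSims:MPCPS10}, extend each diagonal path by a suitable $\tau$ so that every off-diagonal $s_\mu s_\nu^*$ is annihilated when sandwiched between $s_\tau s_\tau^*$ factors; this produces a positive scalar multiple of some $s_\lambda s_\lambda^* \sim s_{s(\lambda)}$ dominated by $a$, and hence a subprojection of $B$ equivalent to a vertex projection.

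Next, I would show that each vertex projection $s_v$ is infinite. By hypothesis there exists a generalised cycle $(\mu, \nu)$ with an entrance and a path $\eta \in v\Lambda r(\mu)$. The argument of \cite[Lemma~3.7]{EvansSims:JFA12} transfers to the twisted setting with only cosmetic changes: the twisting cocycle $c$ alters the products $s_\mu s_\nu^*$ by scalar phases in $\TT$, which do not affect range projections or their Murray--von Neumann equivalences. Thus $s_{r(\mu)}$ is infinite, and since $s_\eta^* s_\eta = s_{r(\mu)}$ implements an equivalence $s_{r(\mu)} \sim s_\eta s_\eta^* \le s_v$, we conclude that $s_v$ is infinite. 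Combining with the first step, every nonzero hereditary subalgebra of $C^*(\Lambda, c)$ contains an infinite projection.

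For the final assertion, if $\Lambda$ is also cofinal then I would deduce simplicity of $C^*(\Lambda, c)$ as follows. The gauge-invariant uniqueness theorem \cite[Corollary~7.7]{KPS4} reduces the classification of nontrivial gauge-invariant ideals to nonempty proper hereditary and saturated subsets $H \subsetneq \Lambda^0$, and cofinality forces any such $H$ to be empty; aperiodicity promotes this to nonvanishing of arbitrary ideals via a standard Cuntz--Krieger-type argument, the twisted version of which is contained in the proof of \cite[Corollary~7.7]{KPS4}. Given simplicity, any hereditary subalgebra is full, and pure infiniteness then follows immediately from the preceding paragraph together with the standard characterisation of purely infinite simple $C^*$-algebras in terms of infinite projections in hereditary subalgebras (see \cite{RordamBook2002}).

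I expect the main obstacle to be the first step: namely, executing the aperiodicity-plus-expectation reduction carefully in the presence of the cocycle $c$. In the untwisted setting the relations $s_\mu s_\nu = s_{\mu\nu}$ make the bookkeeping clean, whereas here the relations $s_\mu s_\nu = c(\mu,\nu) s_{\mu\nu}$ produce phase factors that must be tracked through every compression and approximation. However, because these phases are unimodular scalars they do not affect any of the norm estimates or projection comparisons involved, so the argument goes through essentially verbatim once one is willing to carry the phases through the calculation.
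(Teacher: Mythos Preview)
Your proposal is correct and follows essentially the same route as the paper: show that every vertex projection $s_v$ is infinite via the Evans--Sims generalised-cycle argument (unaffected by unimodular phases), then invoke the BPRS-type aperiodicity-plus-expectation argument to locate a vertex projection inside any hereditary subalgebra, and finally appeal to cofinality for simplicity. The paper differs only cosmetically---it treats the infinite-vertex-projection step first and, rather than sketching the hereditary-subalgebra and simplicity arguments, cites \cite[Proposition~5.3]{BPRS} (noting it carries over verbatim to the twisted setting) and \cite[Corollary~8.2]{KPS4} respectively.
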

\begin{proof}
We first show that each $s_v$ is an infinite projection. To see this, fix $v \in
\Lambda^0$. Then there exists $\lambda \in v\Lambda$ and a generalised cycle $(\mu,\nu)$
with an entrance such that $s(\lambda) = r(\mu)$. As discussed above, $s^*_\lambda
s_\lambda = s_{s(\lambda)}$ is infinite, and so $s_v \ge s_\lambda s^*_\lambda \sim
s^*_\lambda s_\lambda$ is also infinite.

The proof of \cite[Proposition 5.3]{BPRS} now shows that every hereditary subalgebra of
$C^*(\Lambda, c)$ contains an infinite projection: the argument generalises to $k$-graph algebras
(see, for example \cite{Sims:CJM06}), and applies without modification to twisted
$k$-graph $C^*$-algebras. If $\Lambda$ is also cofinal, then \cite[Corollary~8.2]{KPS4}
shows that $C^*(\Lambda, c)$ is also simple.
\end{proof}

Recall that for $t \in \RR$, we write $\omega_t$ for the character $s \mapsto e^{\imath
ts}$ of $\RR$.

\begin{cor}
Let $\Lambda$ be an aperiodic, cofinal, row-finite $k$-graph with no sources. Suppose
that $c \in \Zcat2(\Lambda, \RR)$. If every vertex in $\Lambda$ can be reached from a
generalised cycle with an entrance, then $C^*(\Lambda, \omega_t \circ c)$, $t \in \RR$
are mutually isomorphic Kirchberg algebras.
\end{cor}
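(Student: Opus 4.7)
The plan is to verify that each $C^*(\Lambda,\omega_t\circ c)$ is a UCT Kirchberg algebra whose Elliott invariant is independent of $t$, and then invoke the Kirchberg-Phillips classification theorem.

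First I would check the Kirchberg properties. Since $\omega_t\circ c\in\Zcat2(\Lambda,\TT)$ for every $t$, Proposition~\ref{prp:purely infinite} applies directly and yields that every $C^*(\Lambda,\omega_t\circ c)$ is simple and purely infinite. For nuclearity and the UCT I would appeal to the realisation $C^*(\Lambda,\omega_t\circ c)\cong C^*(\Gg_\Lambda,\sigma_{\omega_t\circ c})$ recalled before Proposition~\ref{prp:strong cty}: the path groupoid $\Gg_\Lambda$ is amenable by \cite{KP2000}, and the twisted $C^*$-algebra of a second countable amenable Hausdorff \'etale groupoid is nuclear and satisfies the UCT by Tu's theorem. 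Thus each $C^*(\Lambda,\omega_t\circ c)$ is a separable UCT Kirchberg algebra. Note that by Theorem~\ref{thm:main K-theory} all these algebras are simultaneously unital (when $\Lambda^0$ is finite) or simultaneously non-unital, in which case Zhang's dichotomy makes them stable.

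Next I would apply Theorem~\ref{thm:main K-theory} to obtain, for every $t\in\RR$, an isomorphism
\[
K_*(C^*(\Lambda,\omega_t\circ c))\cong K_*(C^*(\Lambda))
\]
carrying $[s_v]_0$ to $[s_v]_0$ for each $v\in\Lambda^0$, and carrying $[1]_0$ to $[1]_0$ in the unital case. In particular, the Elliott invariant of $C^*(\Lambda,\omega_t\circ c)$ does not depend on $t$: the pointed triple $(K_0,[1]_0,K_1)$ agrees with that of $C^*(\Lambda)$ in the unital case, and the pair $(K_0,K_1)$ agrees with that of $C^*(\Lambda)$ in the stable case.

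The Kirchberg-Phillips classification theorem \cite{RordamBook2002} then gives $C^*(\Lambda,\omega_t\circ c)\cong C^*(\Lambda)$ for each $t\in\RR$, so the algebras $C^*(\Lambda,\omega_t\circ c)$ are mutually isomorphic. The step I expect to require the most care is the verification of nuclearity and the UCT for the twisted algebras, since the other ingredients are essentially immediate from results already in the paper; but both reduce to Tu's theorem applied to $\Gg_\Lambda$, whose amenability is supplied by \cite{KP2000}.
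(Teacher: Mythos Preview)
Your proposal is correct and follows essentially the same route as the paper: Proposition~\ref{prp:purely infinite} for simple purely infinite, Theorem~\ref{thm:main K-theory} for the $K$-theory (and unitality) matching, and then Kirchberg--Phillips. The only cosmetic difference is that the paper obtains nuclearity and the UCT by citing \cite[Corollary~8.7]{KPS4} directly rather than spelling out the groupoid-amenability/Tu argument, and it does not invoke Zhang's dichotomy, which is unnecessary since Kirchberg--Phillips already covers the non-unital case.
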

\begin{proof}
Fix $t \in \RR$. Proposition~\ref{prp:purely infinite} implies that $C^*(\Lambda)$ and
$C^*(\Lambda, \omega_t \circ c)$ are simple and purely infinite. Corollary~8.7 of
\cite{KPS4} implies that they are nuclear and UCT-class. Theorem~\ref{thm:main K-theory}
implies that $C^*(\Lambda)$ and $C^*(\Lambda, \omega_t \circ c)$ are either both unital
or both nonunital, that $K_*(C^*(\Lambda)) \cong K_*(C^*(\Lambda, \omega_t \circ c))$,
and that this isomorphism carries $[1_{C^*(\Lambda)}]_0$ to $[1_{C^*(\Lambda, \omega_t
\circ c)}]_0$ when both $C^*$-algebras are unital. The Kirchberg-Phillips theorem
\cite[Theorem~4.2.4]{Phillips:DM00} (see also \cite[Corollary C]{Kirchberg1994}) then
gives $C^*(\Lambda, \omega_t \circ c) \cong C^*(\Lambda)$.
\end{proof}

\end{document}